\newtheorem{theorem}{Theorem}[section]
\newtheorem{proposition}[theorem]{Proposition}
\newtheorem{lemma}[theorem]{Lemma}
\newtheorem{corollary}[theorem]{Corollary}
\theoremstyle{remark}
\newtheorem*{remark}{Remark}
\newtheorem*{notation_remark}{Notation remark}
\theoremstyle{definition}
\numberwithin{equation}{section}
\newcommand{\Gam}[2]{\Gamma\biggl[\genfrac{}{}{0pt}{}{#1}{#2}\biggr]}
\newcommand{\FO}[3]{{}_1F_1\biggl[\genfrac{}{}{0pt}{}{#1}{#2}\biggm|#3\biggr]}
\newcommand{\FT}[4]{{}_2F_1\biggl[\genfrac{}{}{0pt}{}{#1, #2}{#3}\biggm|#4\biggr]}
\newcommand{\R}{\mathbb{R}}
\newcommand{\Z}{\mathbb{Z}}
\newcommand{\C}{\mathbb{C}}
\newcommand{\T}{\mathbb{T}}
\newcommand{\I}{\mathbb{I}}
\newcommand{\F}{\mathcal{F}}
\newcommand{\CT}{\mathcal{T}_s}
\newcommand{\CO}[1]{G_{#1}}
\let\oldpsi\psi
\renewcommand{\psi}{\oldpsi_s}
\let\oldrho\rho
\renewcommand{\rho}{\oldrho_s}
\DeclarePairedDelimiter\abs{\lvert}{\rvert}
\DeclarePairedDelimiter\norm{\lVert}{\rVert}
\DeclarePairedDelimiter\bra{(}{)}
\DeclareMathOperator{\sign}{sgn}
\DeclareMathOperator{\Tr}{Tr}
\DeclareMathOperator{\supp}{supp}
\DeclareMathOperator{\wlim}{w-lim\ }
\DeclareMathOperator{\spann}{span}
\newcommand\blfootnote[1]{%
	\begingroup
	\renewcommand\thefootnote{}\footnote{#1}%
	\addtocounter{footnote}{-1}%
	\endgroup
}
\title{Unitary transform diagonalizing the confluent hypergeometric kernel}
\author{Sergei M. Gorbunov${}^*$
\blfootnote{Institute for System Programming of the Russian Academy of Sciences, Moscow, Russia}
\blfootnote{Moscow Institute of Physics and Technology, Dolgoprudny, Moscow Region, Russia}
\blfootnote{Steklov Mathematical Institute of Russian Academy of Sciences, Moscow, Russia}}
\date{}
\begin{document}
\maketitle
\begin{abstract}
We consider the image of the operator inducing the determinantal point process
with the confluent hypergeometric kernel.
The space is described as the image of $L_2[0, 1]$ under a unitary transform,
which generalizes the Fourier transform.
For the derived transform we prove a counterpart of the Paley-Wiener theorem.
We use the theorem to prove that the corresponding analogue of the Wiener-Hopf
operator is a unitary equivalent of the usual Wiener-Hopf operator, which implies
that it shares the same factorization properties and Widom's trace formula.
Finally, using the introduced transform we give explicit formulae for the
hierarchical decomposition of the image of the operator induced by the
confluent hypergeometric kernel.\end{abstract}

\section{Introduction}
\let\thefootnote\relax\footnotetext{This work was supported by
Ministry of Science and Higher Education of the Russian Federation (Grant No. 075-15-2024-529)}

Fix a complex number $s$ such that $\Re s >-1/2$.
For $x, y\in\R$ such that $x\ne y$ consider the following kernel
\begin{equation}\label{1_eq:CHK_def}
        K^s(x, y) = \rho(x)\rho(y)
        \frac{Z_s(x)\overline{Z_s(y)}-e^{i(x-y)}\overline{Z_s(x)}Z_s(y)}{2\pi i(y-x)},
        \end{equation}
        where
            \begin{align*}
    &\Gam{a, b,\ldots}{c, d\ldots} =
    \frac{\Gamma(a)\Gamma(b)\ldots}{\Gamma(c)\Gamma(d)\ldots},
    \quad \rho(x)=\abs{x}^{\Re s}e^{-\frac{\pi}{2}\Im s\sign x},\\
    &Z_s(x) = \Gam{1+s}{1+2\Re s}\FO{\bar{s}}{1+2\Re s}{ix},
    \end{align*}
    and ${}_1F_1$ stands for the confluent hypergeometric function
    defined by the formula~\eqref{A_eq:def}. For $x=y$ define $K^s(x, x)$
    by the L'H\^opital rule. The kernel induces a locally trace-class operator
    of orthogonal projection on $L_2(\R)$ (see Theorem~\ref{2:Diagonalization}
    or~\cite[Corollary~1]{B_23}) and by the Macchi-Soshnikov-Shirai-Takahashi Theorem~\cite{M_75, S_00, ST_03}
    induces a determinantal point process. The process was first derived by Borodin and
    Olshanski as the scaling limit of the Pseudo-Jacobi orthogonal polynomial ensemble
    on the real line~\cite{BO_01}. It may also be derived as the scaling limit of the Jacobi
    circular orthogonal polynomial ensemble; these calculations were done by Bourgade,
    Nikeghbali and Rouault (see~\cite{BNR_06} or Theorem~\ref{5:BNR_th}). For $s=0$ the
    kernel $K^0(x, y)$ reduces to the sine kernel. The sine kernel 
    is a projector onto the Paley-Wiener space of entire functions.
    The purpose of the present paper is to find a similar 
    description of the kernel $K^s(x, y)$ for arbitrary parameter $s$.
    
    The image of the operator induced by the kernel $K^s(x, y)$ was described by Bufetov~\cite{B_23}.
    He decomposed that space into the sum of one-dimensional subspaces. 
    Each subspace is spanned by a function
    defined by its behavior at zero (see Subsection~\ref{1_SS:PW_space}).
    In the present paper we give another description of that space and reproduce Bufetov's result.
    
    Introduce the "generalized exponent"
     \begin{equation}
        \CT(x) = \frac{e^{-ix}}{\sqrt{2\pi}}\rho(x)\overline{\psi(x)}Z_s(x),
    \end{equation}
    where
    \[
    \psi(x)=e^{-\frac{i\pi}{2}\Re s\sign x}\abs{x}^{-i\Im s}.
    \]
    The integral
     \begin{equation}\label{2_eq:CT_def}
   \CT f(\omega) = \int_\R \CT(\omega x)f(x)dx
    \end{equation}
    is a well-defined function of $\omega$, which is continuous on $\mathbb{R}\setminus \{0\}$
    for any $f\in L_1(\R)\cap L_\infty(\R)$. 
    
    \begin{notation_remark}
    Here and subsequently for a kernel $K(x, y)$ we denote the respective operator
    \[
    (Kf)(x) = \int_{-\infty}^\infty K(x, y)f(y)dy
    \]
    by $K$.
    Further, for a function $h\in L_\infty(\R)$ let $h$ also stand for the respective operator of
    pointwise multiplication on $L_2(\R)$:
    \[
    (hf)(x) = h(x)f(x).
    \]
    For a subset $A\subset\R$ by $\I_A$ we denote
    the indicator function of $A$. Let $\I_\pm = \I_{\R_\pm}$. We adopt the following convention
    for the Fourier transform
    \[
    \hat{f}(\omega)= \frac{1}{\sqrt{2\pi}}\F f(\omega)=\frac{1}{2\pi}\int_{-\infty}^\infty e^{-i\omega x}f(x)dx.
    \]
    \end{notation_remark}
    
     \begin{theorem}\label{2:Diagonalization}
        The operator $\CT$ defines an isometry on the dense subset
        $L_1(\R)\cap L_\infty(\R)$ of $L_2(\R)$ and extends to a unitary operator
        diagonalizing the confluent hypergeometric kernel
        \[
         \CT^*\I_{[0, 1]}\CT = \psi K^s\psi^*.
        \]
        The equality may be treated as a relation between the corresponding kernels
        \begin{equation}\label{2_eq:CD_cont}
        \int_0^1\overline{\CT(xt)}\CT(yt)dt = \psi(x)K^s(x, y)\overline{\psi(y)}.
        \end{equation}
    \end{theorem}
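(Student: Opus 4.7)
The plan is to derive the whole theorem from a single Christoffel-Darboux-type identity for $Z_s$. That identity is the kernel relation \eqref{2_eq:CD_cont}, which directly yields the operator equation $\CT^*\I_{[0,1]}\CT=\psi K^s\psi^*$ on the dense domain $L_1\cap L_\infty$. The isometry and unitarity statements are then obtained by rescaling the integration range and letting $N\to\infty$.

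\textbf{Kernel identity.} Using $\rho(\lambda x)=\lambda^{\Re s}\rho(x)$ and $\psi(\lambda x)=\lambda^{-i\Im s}\psi(x)$ for $\lambda>0$, the product $\rho(xt)\rho(yt)\psi(xt)\overline{\psi(yt)}$ factors as $t^{2\Re s}\rho(x)\rho(y)\psi(x)\overline{\psi(y)}$. Substituting the definition of $\CT$ into \eqref{2_eq:CD_cont} and cancelling reduces the identity to
\[
\int_0^1 t^{2\Re s}e^{it(x-y)}\overline{Z_s(xt)}Z_s(yt)\,dt=\frac{Z_s(x)\overline{Z_s(y)}-e^{i(x-y)}\overline{Z_s(x)}Z_s(y)}{i(y-x)}.
\]
To prove this I would exploit the ODE $\tau Z_s''(\tau)+(c-i\tau)Z_s'(\tau)-i\bar s Z_s(\tau)=0$, with $c=1+2\Re s$, inherited from Kummer's equation via $z=i\tau$, together with its complex conjugate satisfied by $\overline{Z_s}$. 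Placing both ODEs in Sturm-Liouville form with integrating factors $\tau^c e^{\mp i\tau}$ and forming the Wronskian combination (multiplying the SL identity for $\overline{Z_s(xt)}$ by $e^{-iyt}Z_s(yt)$, that for $Z_s(yt)$ by $e^{ixt}\overline{Z_s(xt)}$, and subtracting) produces a function $W(t)$ satisfying $W'(t)=i(y-x)\,t^{2\Re s}e^{it(x-y)}\overline{Z_s(xt)}Z_s(yt)$ with $W(0)=0$ (here $\Re c>0$ is essential). Evaluation of $W(1)$, after using Kummer's transformation ${}_1F_1(a;c;z)=e^z{}_1F_1(c-a;c;-z)$ to collapse first-derivative terms onto the bilinear form, recovers the right-hand side.

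\textbf{Isometry, unitarity, and the main obstacle.} Applying the same argument with the substitution $t\mapsto t/N$ shows that $\CT^*\I_{[0,N]}\CT$ has kernel $N\psi(x)K^s(Nx,Ny)\overline{\psi(y)}$, and an analogous formula holds for $\I_{[-N,0]}$. Using the large-argument asymptotic expansion of ${}_1F_1(\bar s;c;i\tau)$, the oscillatory cross-terms cancel in the antisymmetric combination defining $K^s$, and the rescaled kernel behaves as an approximate identity on each half-line; hence $\CT^*\I_{[-N,N]}\CT\to I$ strongly on the dense domain as $N\to\infty$, yielding $\CT^*\CT=I$ and the isometry. Unitarity then follows either by applying the symmetric computation to $\CT\CT^*$, or by combining the isometry with the description of the image of $K^s$ from \cite{B_23}. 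The main technical obstacle is the Christoffel-Darboux step itself: since the Kummer ODE is not formally self-adjoint in a real weight and the phase $e^{it(x-y)}$ couples nontrivially with the SL integrating factors $e^{\pm i\tau}$, the first-derivative terms generated in $W'(t)$ must be absorbed by a further application of the Kummer transformation — this bookkeeping is where I expect most of the subtlety to lie.
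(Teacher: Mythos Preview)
Your proposal follows a genuinely different route from the paper at both stages.

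\textbf{The kernel identity.} Your reduction of \eqref{2_eq:CD_cont} to the bilinear integral identity for $Z_s$ is correct, and the Sturm--Liouville/Wronskian strategy from Kummer's equation is a reasonable direct attack. The paper does not do this: it obtains \eqref{2_eq:CD_cont} as the scaling limit of the discrete Christoffel--Darboux formula for the OPUC with weight $w_s$, writing $\frac{1}{n}\sum_{j<n}$ as a Riemann sum $\int_0^1$ and using Stirling asymptotics for the explicit polynomials (Lemma \ref{5:ker_conv}). Your approach, if completed, would be more self-contained (no OPUC, no Stirling), but you correctly flag that the boundary evaluation $W(1)$ is the crux: the na\"{\i}ve antiderivative of $t^{2\Re s}e^{it(x-y)}\overline{Z_s(xt)}Z_s(yt)$ is not the right-hand side, and the derivative terms produced by the SL form must be reorganized via Kummer's transformation. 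This step is left as an expectation rather than carried out.

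\textbf{Isometry and unitarity.} Here the gap is more serious. You want to show $\CT^*\I_{[-N,N]}\CT\to I$ by treating $N\psi(x)K^s(Nx,Ny)\overline{\psi(y)}$ as an approximate identity. But after inserting the asymptotics of $Z_s$, the leading contribution from $[0,N]$ is
\[
\frac{1-e^{iN(x-y)}\bigl[\psi(x)\overline{\psi(y)}\bigr]^2}{2\pi i(y-x)},
\]
which is not of convolution type and, off the diagonal, converges by Riemann--Lebesgue to the Cauchy kernel rather than concentrating mass; moreover the subleading corrections are not uniformly controlled near $x=0$ or $y=0$. The paper deliberately avoids both the diagonal and the origin: it evaluates the rescaled kernel only against functions supported on \emph{disjoint} compacts bounded away from $0$ (Lemma \ref{5:T_diagonal}), deduces from a characterization of multiplication operators (Proposition \ref{5:diag_mult}) that $\CT^*\CT$ is multiplication by some $g\in L_\infty$, invokes dilation invariance to force $g\equiv C$, and pins down $C=1$ by the single computation $\lVert\CT\I_{[n,n+1]}\rVert_{L_2}\to1$ (Lemma \ref{5:T_norm}). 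Your approximate-identity route would need a separate and nontrivial analysis of the diagonal and near-origin contributions, which you have not supplied.
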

    \begin{remark}
    For $s=0$ we have $\F=\mathcal{T}_0$.
    \end{remark}
    \begin{remark}
    Recall that a determinantal measure is invariant under gauge transformations
    of the corresponding operator. Gauge transformations are conjugations of an
    operator $K$ by an operator of pointwise multiplication by a function $\varphi K\varphi^*$ for a function
    $\varphi$ satisfying $\abs{\varphi}=1$.
    \end{remark}
    
    We prove the relation~\eqref{2_eq:CD_cont} in Subsection~\ref{sec:christ_darb}. We explain how that relation implies Theorem~\ref{2:Diagonalization} in Subsection \ref{sec:main_th_proof}.
    
    \subsection{The Paley-Wiener space and the Paley-Wiener Theorem for $\CT$}\label{1_SS:PW_space}
    Let $\mathcal{PW}_s$ stand for the image of the operator $K^s$. For $s=0$ the
    operator $\CT$ coincides with the Fourier transform; the corresponding
    space $\mathcal{PW}_0$ is the Paley-Wiener space --- the space of entire
    functions, whose Fourier transforms are supported on $[0, 1]$. A description of
    $\mathcal{PW}_s$ was given by Bufetov in~\cite{B_23}. It is shown that any
    function in $\mathcal{PW}_s$ extends to an entire function multiplied by
    $\rho(x)$. To be precise, introduce the subspaces
    \[
    H^{(s, n)} = \left\{f\in\mathcal{PW}_s: f(x) = \rho(x)h_f(x), h_f(z) = O(z^n), z\to 0\right\},
    \]
    where $h_f(z)$ is an entire function. Observe that $H^{(s, n+1)}\subset H^{(s, n)}$.
    Let $L^{(s, n)}$ be the orthogonal complement of $H^{(s, n+1)}$ in $H^{(s, n)}$. We have
     \[
    \mathcal{PW}_s= \bigoplus_{n\in\Z_{\ge 0}}L^{(s, n)}.
    \]
    In~\cite{B_23} it is further proved that $L^{(s, n)}$ are one-dimensional. 
    Using Theorem \ref{2:Diagonalization} we are able to reproduce that. Furthermore, we
    give explicit formulae for the functions spanning $L^{(s, n)}$.
    
    \begin{corollary}\label{2:H_decomposition}
    \begin{itemize}
    \item The space $\mathcal{PW}_s$ is the image of $L_2[0, 1]$ under $\psi^*\CT^*$.
    \item Any $f\in \mathcal{PW}_s$ is an entire function multiplied by $\rho(x)$.
    \item Introduce the subspaces
    \[
    F^{(s, n)} = \I_{[0, 1]}t^{s}\spann\langle 1, t, \ldots, t^{n-1}\rangle\subset L_2[0, 1].
    \]
    We have that $H^{(s, n)}$ is the image of $\bra*{F^{(s, n)}}^{\perp}\cap L_2[0, 1]$ under $\psi^*\CT^*$.
    \item The functions spanning $L^{(s, n)}$ may be expressed in terms of orthogonal polynomials.
    Denote by $\left\{P_n^{(2\Re s)}\right\}_{n\ge 0}$ orthogonal polynomials with respect to
    the weight $t^{2\Re s}$ on $[0, 1]$. Then $L^{(s, n)}$ is
    spanned by 
    $$
    \mathcal{L}_{(s, n)} = \psi^*\CT^*\bra*{\I_{[0, 1]}(t)t^{s}P^{(2\Re s)}_n(t)}.
    $$
    \end{itemize}
    \end{corollary}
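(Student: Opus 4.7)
The plan is to push everything through the unitary identification provided by Theorem~\ref{2:Diagonalization}: since $K^s=\psi^*\CT^*\I_{[0,1]}\CT\psi$, the restriction of $\psi^*\CT^*$ to $L_2[0,1]$ is a unitary onto $\mathcal{PW}_s$, and the three assertions reduce to computing $\psi^*\CT^*g$ explicitly for $g\in L_2[0,1]$ and reading off its behaviour near the origin. Starting from
\[
(\psi^*\CT^* g)(x)=\overline{\psi(x)}\int_0^1\overline{\CT(tx)}\,g(t)\,dt,
\]
I will exploit the homogeneities $\rho(tx)=t^{\Re s}\rho(x)$ and $\psi(tx)=t^{-i\Im s}\psi(x)$, valid for $t>0$; the factor $\overline{\psi(x)}$ cancels the $\psi(x)$ produced inside the integrand, a clean $\rho(x)$ is extracted, and the formula collapses to
\[
(\psi^*\CT^* g)(x)=\rho(x)\,h_g(x),\qquad h_g(z):=\int_0^1 W(zt)\,t^{\bar s}\,g(t)\,dt,
\]
with $W(z):=\tfrac{1}{\sqrt{2\pi}}e^{iz}\overline{Z_s(\bar z)}$ entire in $z$. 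A standard differentiation-under-the-integral argument makes $h_g$ entire, which already yields the first bullet: every element of $\mathcal{PW}_s$ is $\rho$ times an entire function, i.e.\ $\mathcal{PW}_s=H^{(s,0)}$.

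For the second bullet I will expand $W$ in its Taylor series at the origin and interchange with the integral:
\[
h_g(z)=\sum_{k\ge 0}c_k\,z^k\int_0^1 t^{k+\bar s}g(t)\,dt,\qquad c_k:=\frac{W^{(k)}(0)}{k!}.
\]
Provided all $c_k$ are nonzero, the condition $h_g(z)=O(z^n)$ becomes equivalent to the vanishing of the first $n$ of these integrals, which in turn is the orthogonality $g\perp F^{(s,n)}$ in $L_2[0,1]$; combined with the unitarity from Step~1, this gives the second bullet. The nonvanishing of the $c_k$ is the one genuinely delicate point, and I will settle it by Kummer's transformation, which rewrites
\[
W(z)=\frac{1}{\sqrt{2\pi}}\Gam{1+\bar s}{1+2\Re s}\FO{1+\bar s}{1+2\Re s}{iz};
\]
since $\Re s>-1/2$ the Pochhammer $(1+\bar s)_k$ is never zero, so no $c_k$ vanishes.

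For the third bullet, unitarity intertwines the two filtrations, so $L^{(s,n)}=\psi^*\CT^*(F^{(s,n+1)}\ominus F^{(s,n)})$. A one-line computation of inner products,
\[
\langle \I_{[0,1]}|t|^{\bar s}t^j,\,\I_{[0,1]}|t|^{\bar s}t^k\rangle_{L_2[0,1]}=\int_0^1 t^{j+k}|t|^{2\Re s}\,dt,
\]
shows that the Gram matrix of $\{\I_{[0,1]}|t|^{\bar s}t^k\}_{k\ge 0}$ coincides with the Gram matrix of the monomials $\{t^k\}_{k\ge 0}$ under the weight $|t|^{2\Re s}$ on $[0,1]$. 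Therefore $F^{(s,n+1)}\ominus F^{(s,n)}$ is one-dimensional and spanned by $\I_{[0,1]}(t)|t|^{\bar s}P_n^{(\Re s)}(t)$; applying $\psi^*\CT^*$ produces the asserted generator of $L^{(s,n)}$. The main obstacle is thus the Kummer-based verification that all $c_k$ are nonzero; everything else reduces to the homogeneity identities in Step~1, term-by-term integration, and the Gram-matrix identity above.
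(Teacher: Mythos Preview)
Your proposal is correct and follows essentially the same route as the paper: use Theorem~\ref{2:Diagonalization} to write any $f\in\mathcal{PW}_s$ as $\rho(x)$ times an entire function given by an explicit integral in $g\in L_2[0,1]$, then identify the vanishing of the first $n$ Taylor coefficients of that entire function with orthogonality of $g$ to $F^{(s,n)}$. The paper extracts these coefficients via contour integrals $\int_\gamma z^{-k}h_f(z)\,dz$ rather than a direct term-by-term Taylor expansion, and it leaves both the nonvanishing of the coefficients $L_{k-1}$ (your $c_k$) and the derivation of the third bullet implicit, whereas you spell them out via Kummer's transformation and the Gram-matrix computation; in this respect your write-up is slightly more complete.
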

    We deduce Corollary \ref{2:H_decomposition} from Theorem \ref{2:Diagonalization} in Section \ref{5_sec:H_dec}.
    
    Observe that our $P_n^{(2\Re s)}(t)$ are essentially the Jacobi orthogonal polynomials $P^{(2\Re s, 0)}_n(1-2t)$(see~\cite[22.2.1]{AS_64}).
    They may be explicitly expressed in terms of the Hypergeometric function ${}_2F_1$
    defined by the formula~\eqref{A_eq:def}. We have (see~\cite[22.5.42]{AS_64}) that up
    to a constant factor
    \[
    P_n^{(2\Re s)}(t) = \FT{-n}{n+1+2\Re s}{1+2\Re s}{t}.
    \]
    We conclude that $L^{(s, n)}$ is spanned by
    \[
    \mathcal{L}_{(s, n)}(x) = \abs{x}^{\Re s}e^{-\frac{\pi}{2}\Im s\sign x}\int_0^1e^{ixt}
    \FO{s}{1+2\Re s}{-ixt}t^{2\Re s} \FT{-n}{n+1+2\Re s}{1+2\Re s}{t}dt.
    \]
    
    Recall that the Paley-Wiener Theorem (see Theorem~\ref{5:PW_theorem}, \cite[Theorem~19.2]{R_86})
    asserts that the Hardy space $H^2(\mathbb{H})$ of functions, which extend analytically to the
    upper half-plane $\mathbb{H}$, coincides with the space $\F^*L_2(\R_+)$ of functions, whose Fourier transform is supported on $\R_+$. We are able to prove that the Hardy space also coincides with the space $\CT^*L_2(\R_+)$ of functions, whose transform $\CT$ is supported on $\R_+$,
    for general $s$.
    \begin{theorem}\label{2:PW_spaces}
    We have
    \[
    \CT^*\I_+\CT = \F^*\I_+\F.
    \]
    \end{theorem}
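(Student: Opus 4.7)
The plan is to recast the desired equality as a commutation relation. By Theorem \ref{2:Diagonalization} both $\CT$ and $\F$ are unitary, so $U=\F\CT^*$ is unitary and the identity $\CT^*\I_+\CT=\F^*\I_+\F$ is equivalent to $U^*\I_+U=\I_+$, i.e.\ to $U$ commuting with multiplication by $\I_+$. This in turn is equivalent to the two half-line inclusions $UL_2(\R_\pm)\subseteq L_2(\R_\pm)$.

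Computing the integral kernel of $U$ formally via Fubini and the substitution $u=\omega x$ gives
\[
(Ug)(y)=\int_\R g(\omega)\left[\int_\R e^{-iyx}\overline{\CT(\omega x)}\,dx\right]d\omega=\int_\R \frac{g(\omega)}{|\omega|}\,\F[\overline{\CT}](y/\omega)\,d\omega.
\]
Since $\sign(y/\omega)=\sign(y)\sign(\omega)$, both inclusions follow at once from the single distributional claim
\[
\F[\overline{\CT}](\xi)=0\quad\text{for every }\xi<0.
\]

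To establish this I would insert the Euler integral for $Z_s$ into the definition of $\overline{\CT}$, noting first that $\rho(u)\overline{\psi(u)}$ simplifies to the power $(iu)^s$ with its standard principal branch. After the substitution $v=1-t$ this yields
\[
\overline{\CT(u)}=\frac{(-iu)^{\bar s}}{\sqrt{2\pi}\,\Gamma(s)}\int_0^1 e^{iuv}v^{\bar s}(1-v)^{s-1}\,dv.
\]
Interchanging the $v$- and $u$-integrations reduces the claim to the distributional identity
\[
\int_\R e^{iwu}(-iu)^{\bar s}\,du=\frac{2\pi}{\Gamma(-\bar s)}\,(-w)_+^{-\bar s-1},
\]
which I would obtain with the regulator $e^{-\varepsilon|u|}$ by splitting into the integrals over $\{u>0\}$ and $\{u<0\}$, recognising them as incomplete-Gamma integrals, and passing to the limit $\varepsilon\to 0^+$. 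Its support is $(-\infty,0]$, so with $w=v-\xi$ the support condition $v<\xi$ is vacuous on $v\in[0,1]$ whenever $\xi<0$, giving the required vanishing.

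The main technical obstacle is that $\CT\notin L_1(\R)$, so the interchanges of integration above must be handled at the level of tempered distributions. The cleanest route is to test against smooth $g$ with compact support away from $0$ on a single half-line, keep the regulator $e^{-\varepsilon|u|}$ in the inner $u$-integral and pass to the limit there first; continuity and unitarity of $U$ then extend the half-line preservation from this dense subset to all of $L_2(\R_\pm)$, whence $U$ commutes with $\I_+$ as required.
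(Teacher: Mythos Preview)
Your approach is genuinely different from the paper's and the core idea is sound. The paper does not compute the kernel of $U=\F\CT^*$ at all; instead it first uses the integral identity \eqref{2_eq:CD_cont} and Proposition~\ref{5:diag_mult} to prove that the difference $\CT^*\I_+\CT-\F^*\I_+\F$ is a \emph{multiplication} operator, then exploits dilation symmetry to reduce it to $C_s\sign x$, and finally kills the constant $C_s$ by checking that the single function $q=\CT^*\I_{[1/2,1]}$ lies in $H^2(\mathbb{H})$ (Lemma~\ref{5:T_szero}). Your route bypasses this reduction and aims to show directly that $\CT^*g\in H^2(\mathbb{H})$ for every $g$ supported on $\R_+$, which is the natural ``global'' statement behind the paper's single test function. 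Your formula $\overline{\CT(u)}=\dfrac{(-iu)^{\bar s}}{\sqrt{2\pi}\,\Gamma(s)}\int_0^1 e^{iuv}v^{\bar s}(1-v)^{s-1}\,dv$ and the distributional Fourier transform of $(-iu)^{\bar s}$ are both correct, and the regularised interchange you describe does go through once one observes that for $g\in C_c^\infty(a,b)$, $h\in C_c^\infty(-B,-A)$ the relevant variable $w=v-\eta$ stays in a compact interval bounded away from $0$, so the $\varepsilon\to0$ limit is uniform.

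There is, however, a genuine gap: the Euler integral you invoke for ${}_1F_1(s;1+2\Re s;\cdot)$ converges only when $\Re s>0$ (the factor $(1-v)^{s-1}$ is not integrable at $v=1$ otherwise), whereas the theorem is stated for $\Re s>-\tfrac12$. You do not address the range $-\tfrac12<\Re s\le 0$, and a naive analytic continuation in $s$ is obstructed by the fact that $\CT$ depends on both $s$ and $\bar s$. One way to close this gap---closer in spirit to the paper's Lemma~\ref{5:T_szero}---is to avoid the Euler integral entirely and instead verify from the asymptotic expansion \eqref{A_eq:asymp} that $\overline{\CT}$ extends holomorphically to the upper half-plane with at worst $|z|^{\Re s}$ behaviour near $0$ and decay at infinity; this is enough to place $\CT^*g$ in $H^2(\mathbb{H})$ for $g\in C_c(\R_+)$ across the full parameter range, and your density argument then finishes the proof.
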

    See Subsection \ref{sec:main_th_proof} for the proof of Theorem \ref{2:PW_spaces}.

    \subsection{Wiener-Hopf factorization}
    
    For $f\in L_\infty(\R)$ the Wiener-Hopf operator is defined by the formula
    \[
    W_f = \I_+\F f\F^*\I_+.
    \]
    Similarly, using the introduced transform, define
    \[
    \CO{f} = \I_+\CT f\CT^*\I_+.
    \]
    For $s=0$ we have $\CO{f}=W_f$. Let $\F^* L_1(\R)$ stand for the image of $L_1(\R)$
    under the inverse Fourier transform. This space is an algebra with pointwise multiplication.
    It may be decomposed into subalgebras $\F^*L_1(\R)\simeq \F^* L_1(\R_+)\oplus \F^*L_1(\R_-)$,
    corresponding to functions, whose Fourier transform is supported on $\R_\pm$.
    One can show that the Wiener-Hopf operator preserves multiplication on these subalgebras:
    \[
    W_{fg} = W_fW_g, \quad f, g\in\F^*L_1(\R_\pm).
    \]
    The factorization has been used by Widom~\cite{W_82} to derive the trace formula for the
    Wiener-Hopf operators, which, we note, implies the Central Limit Theorem for the sine process.
    
    Recall that the $1/2$-Sobolev space $H_{1/2}(\R)$ is a Hilbert space
    of functions with the following norm
    \[
    \norm{f}_{H_{1/2}}^2 = \norm{f}_{L_2}^2+\norm{f}_{\dot{H}_{1/2}}^2,
    \quad \norm{f}_{\dot{H}_{1/2}}^2 =
    \int_\R\abs{\omega}\abs{\hat{f}(\omega)}^2d\omega.
    \]
    For $f\in H_{1/2}(\R)\cap\F^*L_1(\R)$ denote its decomposition
    into the positive and negative frequencies $f=f_++f_-$, $\supp \hat{f}_\pm\subset \R_\pm$.
    We have that $[W_{f_-}, W_{f_+}]$ is trace-class, and its trace is equal to
    \[
    \Tr [W_{f_-}, W_{f_+}] = \int_0^\infty \omega \hat{f}(\omega)\hat{f}(-\omega)d\omega.
    \]
    See, for example,~\cite[Sect.~5.2]{BE_03} for these statements.
    For completeness we include the proof to Section~\ref{6_sec:WH_fact}.
    
    By Theorem \ref{2:PW_spaces} we have that $W_f$ and $G_f$ are unitarily equivalent
    \[
    G_f = \CT\F^*W_f\F\CT^*.
    \]
    We conclude with the following corollary of Theorem \ref{2:PW_spaces}.
    \begin{corollary}\label{2:WH_factorization}
    \begin{itemize}
    \item For $f, g\in \F^*L_1(\R_\pm)$ we have that $G_{fg} = G_fG_g$.
    For $f_\pm\in\F^*L_1(\R_\pm)$ the equality $G_{f_+f_-} = G_{f_-}G_{f_+}$ holds.
    \item For $f\in H_{1/2}(\R)\cap\F^*L_1(\R)$ the commutator
    $[G_{f_-}, G_{f_+}]$ is trace-class. Its trace is given by
    \[
    \Tr[G_{f_-}, G_{f_+}] = \int_0^\infty \omega \hat{f}(\omega)\hat{f}(-\omega)d\omega.
    \]
    \end{itemize}
    \end{corollary}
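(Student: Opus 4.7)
\textbf{Proof plan for Corollary \ref{2:WH_factorization}.} The plan is to turn Theorem \ref{2:PW_spaces} into a unitary equivalence between $G_f$ and $W_f$, and then transfer every property of the classical Wiener--Hopf operator mechanically. First I would set $V=\CT\F^*$, which is unitary since both $\CT$ and $\F$ are. Theorem \ref{2:PW_spaces} says $\CT^*\I_+\CT=\F^*\I_+\F$; conjugating both sides by $\F$ gives $V^*\I_+ V=\I_+$, and since $V$ is unitary this is equivalent to $\I_+V=V\I_+$, i.e.\ $V$ commutes with the projection $\I_+$. Using this commutation I compute
\[
G_f=\I_+\CT f\CT^*\I_+=\I_+ V\,\F f\F^*\,V^*\I_+=V\,\I_+\F f\F^*\I_+\,V^*=V W_f V^*,
\]
which is precisely the formula $G_f=\CT\F^*W_f\F\CT^*$ stated before the corollary.

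Given the unitary equivalence, the first bullet is immediate: for $f,g\in\F^*L_1(\R_\pm)$ the factorization $W_{fg}=W_fW_g$, quoted from the Wiener--Hopf theory (and proved in the self-contained Section \ref{6_sec:WH_fact}), yields
\[
G_{fg}=VW_{fg}V^*=VW_fV^*\cdot VW_gV^*=G_fG_g,
\]
and the mixed identity $G_{f_+f_-}=G_{f_-}G_{f_+}$ follows from the corresponding identity $W_{f_+f_-}=W_{f_-}W_{f_+}$ in exactly the same way.

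For the second bullet, unitary conjugation preserves the trace class and the trace: if $[W_{f_-},W_{f_+}]\in\mathcal{S}_1$ then $[G_{f_-},G_{f_+}]=V[W_{f_-},W_{f_+}]V^*\in\mathcal{S}_1$, and
\[
\Tr[G_{f_-},G_{f_+}]=\Tr\bigl(V[W_{f_-},W_{f_+}]V^*\bigr)=\Tr[W_{f_-},W_{f_+}]=\int_0^\infty\omega\,\hat f(\omega)\hat f(-\omega)\,d\omega,
\]
where the last equality is Widom's trace formula recalled above. This reduces the entire corollary to Theorem \ref{2:PW_spaces} together with the classical Wiener--Hopf results, so the only real obstacle is extracting the commutation $V\I_+=\I_+V$ cleanly from Theorem \ref{2:PW_spaces}; once that identity is in hand, no further analytic work is required.
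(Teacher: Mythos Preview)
Your proposal is correct and follows essentially the same approach as the paper: define the unitary $V=\CT\F^*$ (the paper calls it $\mathcal{U}$), use Theorem \ref{2:PW_spaces} to obtain $G_f=VW_fV^*$, and then transfer the classical Wiener--Hopf factorization and Widom's trace formula by unitary conjugation. The only cosmetic difference is that you isolate the commutation $V\I_+=\I_+V$ explicitly, whereas the paper substitutes the projection identity $\F^*\I_+\F=\CT^*\I_+\CT$ directly into $\F^*W_f\F$; the two computations are equivalent.
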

    Corollary \ref{2:WH_factorization} is proved in Section \ref{6_sec:WH_fact}.
    
    \subsection{Related work}
    As mentioned in the beginning, by the Macchi-Soshnikov-Shirai-Takahashi Theorem~\cite{M_75, S_00, ST_03} the
    kernel $K^s$ induces a determinantal point process $\mathbb{P}_{K^s}$.
    Apart from the constructions of the process given in~\cite{BNR_06, BO_01},
    we recall several more.
    
    The filtration $H^{(s, n)}$ of the spaces $\mathcal{PW}_s$ (see subsection \ref{1_SS:PW_space})
     may be interpreted in terms
    of the Palm hierarchy. In~\cite{B_23} it is shown that the Palm measure of $\mathbb{P}_{K^s}$
    at zero is $\mathbb{P}_{K^{s+1}}$. Therefore if the parameter $s$ is a positive integer,
    the process is the $s$-th Palm measure of the sine process at zero.
    Recall that by the Shirai-Takahashi Theorem~\cite{ST_03P}
    the image of the operator corresponding to the Palm measure differs by a one-dimensional subspace.
    Though the theorem is not applicable directly to the kernel $K^s$,
    the assertion still holds. In~\cite{B_23} it is shown that $H^{(s+1, n)} = \phi H^{(s, n+1)}$
    for some function $\phi$, $\abs{\phi}=1$. In particular, $\mathcal{PW}_{s+1}$ is the image
    of the orthogonal complement of $L^{(s, 0)}$ in $\mathcal{PW}_s$ under the
    multiplication operator $\phi$.
    
    Another construction of $\mathbb{P}_{K^s}$ is the degeneration
    of the more general ${}_2F_1$ determinantal point process~\cite{BD_01}
    under a certain scaling limit.
    
    Let us also recall an interesting connection between the point process
    $\mathbb{P}_{K^s}$ and the space $\mathcal{PW}_s$.
    The Lyons-Peres conjecture, which has been proved by Bufetov, Qiu and Shamov~\cite{BQS_21},
    states that a discrete subset of $\mathbb{R}$ is $\mathbb{P}_K$-almost surely
    a completeness set for a reproducing kernel Hilbert space with the kernel $K(x, y)$.
    This result and Theorem \ref{2:Diagonalization} immediately imply that the functions
    $\{\CT(x_j\cdot)\}_{x_j\in X}$ are dense in $L_2[0, 1]$ for
    $\mathbb{P}_{K^s}$-almost every discrete subset $X\subset\mathbb{R}$.
    
    To make parallels with other processes, we recall the Bessel and the Airy kernel
    determinantal point processes~\cite{TW_93, TW_94}. One general feature of these
    processes is the integrable form of the kernel. Such form yields a connection
    between gap asymptotics and Painlev\'e equations (see~\cite{DKV_11, BD_01}
    for these calculations for $\mathbb{P}_{K^s}$). However, we note that the
    existence of an explicit formula for the diagonalizing unitary transform was used by
    Basor, Ehrhardt and Widom~\cite{BE_03, BW_99} to derive the convergence of additive
    functionals to the Gaussian distribution for the mentioned processes.
    The transform $\CT$ is a counterpart of the Airy transform and the Hankel transform
    diagonalizing the Airy kernel and the Bessel kernel respectively (see~\cite{BE_03, BW_99} for details).
    
    \subsection{Acknowledgements}
    I am deeply grateful to Alexander Bufetov and Alexei Klimenko for useful discussions. I am grateful to the anonymous referees for valuable remarks and comments.
    
    \section{Outline of proof}
    \subsection{Scaling limit of the Christoffel-Darboux formula: proof of the relation \eqref{2_eq:CD_cont}}\label{sec:christ_darb}
    Define the function on the unit circle $\T=\{z\in\mathbb{C}: \abs{z}=1\}$
    \[
    w_s\bra*{z} = \frac{1}{2\pi}\Gam{1+s, 1+\bar{s}}{1+2\Re s}\bra*{1-z}^{\bar{s}}\bra*{1-\bar z}^s, \quad z\in\T.
\]
Let $\{\varphi_n\}_{n\in \Z_{\ge 0}}$ be the orthonormal polynomials 
$$\varphi_n(z)=\kappa_n z^n + (\text{lower degree terms}), \quad\kappa_n>0$$
with respect to the weight $w_s(z)d\theta$, $z=e^{i\theta}$. An exact formula for them is given in Theorem~\ref{A:orth_formula}. Cut the unit circle at $-1$: $\T\setminus \{-1\} = \{e^{i\theta}, \theta\in (-\pi, \pi)\}$. Recall that the Christoffel-Darboux formula \cite[Theorem~2.2.7]{S_05OPUC} states
\begin{multline}\label{3_eq:CD_formula}
K_n\bra*{e^{i\tau}, e^{i\theta}} =  \sqrt{w_s\bra*{e^{i\theta}}w_s\bra*{e^{i\tau}}}\sum_{j=0}^{n-1}\varphi_j\bra*{e^{i\tau}}\overline{\varphi_j\bra*{e^{i\theta}}} =\\
= \sqrt{w_s\bra*{e^{i\theta}}w_s\bra*{e^{i\tau}}}\frac{\overline{\varphi_n^*(e^{i\theta})}\varphi^*_n(e^{i\tau})-\overline{\varphi_n(e^{i\theta})}\varphi_n(e^{i\tau})}{1-e^{i(\tau-\theta)}},
\end{multline}
where $\varphi^*_j(z) = z^j\overline{\varphi_j(1/\bar{z})}$ are the reversed polynomials.
\begin{theorem}[Bourgade, Nikeghbali, Rouault {\cite[Theorem~5]{BNR_06}}]\label{5:BNR_th}
We have
\[
\frac{1}{n}K_n\bra*{e^{ix/n}, e^{iy/n}}\to K^s(x, y), \text{ as }n\to\infty.
\]
\end{theorem}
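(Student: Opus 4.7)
}
The strategy is to feed the scaled arguments $\tau=x/n$, $\theta=y/n$ into the second line of the Christoffel--Darboux formula \eqref{3_eq:CD_formula} and analyse the three building blocks separately: the weight $w_s$, the denominator $1-e^{i(\tau-\theta)}$, and the polynomials $\varphi_n,\varphi_n^*$. The factor $1/n$ in front of $K_n$ pairs with the denominator, while the remaining $n$-dependence from the weight must be balanced against the growth of $\varphi_n$ at the scaled point; this balance is precisely what produces the $\rho(x)\rho(y)$ prefactor and the confluent hypergeometric $Z_s$ in $K^s$.

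First I would expand $1-e^{i(x-y)/n}=-i(x-y)/n+O(n^{-2})$, so that $1/[n(1-e^{i(\tau-\theta)})]\to 1/[-i(x-y)]$, supplying the $1/[2\pi i(y-x)]$ of \eqref{1_eq:CHK_def} up to a sign and a $2\pi$ that will appear from $w_s$. Next, using $1-e^{ix/n}\sim -ix/n$ and the identification of branches $(-ix)^{\bar s}=|x|^{\bar s}e^{-i\pi\bar s\,\sign x/2}$, $(ix)^s=|x|^s e^{i\pi s\,\sign x/2}$, a short computation gives
\[
w_s\!\bra*{e^{ix/n}} = \frac{1}{2\pi}\Gam{1+s,1+\bar{s}}{1+2\Re s}\,n^{-2\Re s}\rho(x)^2\bigl(1+o(1)\bigr),
\]
since the imaginary parts of the exponents conspire into the $e^{-\pi\Im s\,\sign x}$ inside $\rho^2$.

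The main input is the asymptotics of $\varphi_n$ at the scaled argument, which I would derive from the explicit representation provided by Theorem \ref{A:orth_formula}. That formula expresses $\varphi_n(e^{i\theta})$ and its reverse $\varphi_n^*(e^{i\theta})$ in terms of ${}_2F_1$ with a parameter equal to $-n$ (Jacobi-type polynomials on the circle). Applying the classical confluent limit
\[
\lim_{n\to\infty}\FT{-n}{b}{c}{-z/n}=\FO{b}{c}{z},
\]
uniformly on compact sets, and combining it with the Gamma-ratio normalization constants in Theorem \ref{A:orth_formula}, I would obtain limits of the form
\[
n^{-\Re s}\varphi_n\!\bra*{e^{ix/n}} \longrightarrow c_1\, \overline{Z_s(x)},\qquad
n^{-\Re s}\varphi_n^*\!\bra*{e^{ix/n}} \longrightarrow c_2\, Z_s(x),
\]
with explicit constants $c_1,c_2$ depending only on $s$. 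The phase $e^{-ix}$ multiplying $\overline{Z_s(x)}Z_s(y)$ in $K^s$ arises from the factor $e^{in\theta}=e^{iy}$ in the definition $\varphi_n^*(z)=z^n\overline{\varphi_n(1/\bar z)}$ when one pairs $\overline{\varphi_n^*(e^{iy/n})}$ with $\varphi_n^*(e^{ix/n})$: one of these two factors carries $e^{\pm i x}$ and $e^{\pm iy}$ depending on which of $\varphi_n$ or $\varphi_n^*$ is reverted, producing $e^{i(x-y)}$ in the second summand of the numerator of $K^s$.

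Combining the three pieces and checking that the constants $c_1,c_2$ and the Gamma prefactor from $w_s$ multiply to $1/(2\pi i)$ finishes the identification of the limit with $K^s(x,y)$. The main obstacle is the last step: tracking constants and phases through the Szeg\H o--Jacobi orthonormalization, the confluent limit, and the branches of $(1-e^{i\theta})^{\bar s}$. In particular, the appearance of $\sign x$ inside $\rho$ and $\psi$ has to be traced to the correct determination of $(-ix)^{\bar s}$ and $(ix)^s$, and the normalization constants in Theorem \ref{A:orth_formula} must cancel the Gamma factor coming from $w_s$ exactly so as to leave only $Z_s$ (which already absorbs $\Gam{1+s}{1+2\Re s}$ into its definition). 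Uniformity of the confluent limit on compacts is standard and gives the pointwise convergence in $(x,y)$ stated in the theorem.
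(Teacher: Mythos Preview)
Your approach is correct and matches what the paper says about this theorem: the paper does not give a self-contained proof of Theorem~\ref{5:BNR_th} but cites it from \cite{BNR_06} and remarks that ``the theorem is proven by directly taking limit of the right-hand side of the Christoffel--Darboux formula~\eqref{3_eq:CD_formula}''---which is exactly your plan. The building blocks you list (weight asymptotics, denominator expansion, confluent limit of the ${}_2F_1$ polynomials from Theorem~\ref{A:orth_formula}) are precisely those the paper develops in Lemmata~\ref{3:1_conv} and~\ref{3:2_conv} for the parallel statement Lemma~\ref{5:ker_conv}; the paper even notes that the two proofs are ``completely parallel''. The only technical difference is that the paper establishes the confluent limit via the integral representation~\eqref{A_eq:int} rather than the series identity you quote.

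Two small corrections to your intermediate formulas, which you will discover once you do the bookkeeping you already flag as the main obstacle. First, when $\Im s\ne 0$ the quantity $n^{-\Re s}\varphi_n(e^{ix/n})$ does not converge because of an oscillating factor $n^{i\Im s}$; the correct normalization is $n^{-s}$ (compare Lemma~\ref{3:1_conv}, which uses $(1+[nx])^{-s}$). In the Christoffel--Darboux numerator each term pairs a $\varphi_n$ with a $\overline{\varphi_n}$, so these phases cancel and the final limit is unaffected. Second, the limit of the normalized $\varphi_n(e^{ix/n})$ carries an extra factor $e^{ix}$ (again see Lemma~\ref{3:1_conv}); together with the $e^{in\theta}$ from $\varphi_n^*$ this is what produces the $e^{i(x-y)}$ in front of the \emph{second} summand of the numerator of $K^s$, not the $e^{-ix}$ you wrote.
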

\begin{remark}
The kernel derived in \cite{BNR_06} differs from $K^s$ defined by the formula \eqref{1_eq:CHK_def} by a conjugation by $e^{ix/2}$ and interchanging $x$ and $y$. As was already mentioned, the induced point process does not change. However, in order to derive the formula \eqref{2_eq:CD_cont} it will be important that $K^s$ is the limit of $\frac{1}{n}K_n\bra*{e^{ix/n}, e^{iy/n}}$.
\end{remark}

The theorem is proven by direct taking limit of the right-hand side of the Christoffel-Darboux formula \eqref{3_eq:CD_formula}. For a positive $c$ let $[c]$ be its integer part. To derive the formula \eqref{2_eq:CD_cont} express the left-hand side of the identity \eqref{3_eq:CD_formula} as follows
\begin{equation}\label{5_eq:int_repr}
\frac{1}{n}K_n\bra*{e^{i\theta/n}, e^{i\tau/n}} =  \sqrt{w_s\bra*{e^{i\theta/n}}w_s\bra*{e^{i\tau/n}}}\int_0^1\varphi_{[nt]}\bra*{e^{i\theta/n}}\overline{\varphi_{[nt]}\bra*{e^{i\tau/n}}}dt.
\end{equation}
The relation \eqref{2_eq:CD_cont} follows from the convergence of $\varphi_{[nt]}\bra*{e^{i\tau/n}}$.
\begin{lemma}\label{5:ker_conv}
The following convergence
\[
\overline{\CT^n(y, x)} = [nx]^{-i\Im s}\sqrt{w_s\bra*{e^{iy/n}}}\varphi_{[nx]}\bra*{e^{iy/n}} \to \overline{\CT(xy)\psi(xy)}, \quad \text{ as }n\to\infty
\]
takes place locally uniformly for $(x, y)\in (0, \infty)\times \R$.
Further, the estimate
\[
\abs*{\CT^n(y, x)}\le C\abs{y}^{\Re s}\bra*{1+\abs{x}^{\Re s}}
\]
holds for some independent of $n$ constant $C$. That estimate is locally uniform for $(x, y)\in [0, \infty)\times \R$.
\end{lemma}
The proof of Lemma \ref{5:ker_conv} is completely parallel to the proof of Theorem \ref{5:BNR_th} in \cite{BNR_06}. We present this in Section \ref{3_sec:CD_limit}.
\begin{proof}[Proof of the identity \eqref{2_eq:CD_cont}]
A direct substitution of the asymptotics from Lemma \ref{5:ker_conv} into the right hand side of formula \eqref{5_eq:int_repr} gives as $n\to\infty$
\begin{multline*}
\sqrt{w_s\bra*{e^{ix/n}}w_s\bra*{e^{iy/n}}}\int_0^1\varphi_{[nt]}\bra*{e^{ix/n}}\overline{\varphi_{[nt]}\bra*{e^{iy/n}}}dt=\\
= \int_0^1\overline{\CT^n(x, t)}\CT^n(t, y)dt\to \int_0^1\overline{\psi(xt)\CT(xt)}\CT(yt)\psi(yt)dt,
\end{multline*}
where the convergence of the integral follows from the dominated convergence theorem and the estimate in Lemma \ref{5:ker_conv}. We note that for $t>0$ we have $\psi(xt)\overline{\psi(yt)} = \psi(x)\overline{\psi(y)}$. Hence the right-hand side of the equality \eqref{5_eq:int_repr} converges to the left-hand side of the relation \eqref{2_eq:CD_cont} multiplied by $\psi(x)\overline{\psi(y)}$. The convergence of the left-hand side of the equality \eqref{5_eq:int_repr} follows from Theorem~\ref{5:BNR_th}.
\end{proof}

    \subsection{Proof of Theorems \ref{2:Diagonalization} and \ref{2:PW_spaces}
    from the relation \eqref{2_eq:CD_cont}}\label{sec:main_th_proof}
    We show that $\CT$ is unitary using the following criterion for the multiplication operators.
    \begin{proposition}\label{5:diag_mult}
    A bounded linear operator $J$ on $L_2(\R)$ is an operator of pointwise multiplication if and only if for any Borel disjoint sets $A, B\subset \R$ we have $\I_A J\I_B=0$, where $\I_A$ and $\I_B$ are operators of pointwise multiplication by the indicator functions of $A$ and $B$.
    \end{proposition}
    See Section \ref{4_sec:unit} for the proof of Proposition \ref{5:diag_mult}
    
    Using the boundedness of $\CT^*$ (see Lemma \ref{4:boundedness}) and the identity \eqref{2_eq:CD_cont}, we deduce that Theorem \ref{2:PW_spaces} holds after restriction to disjoint Borel sets.
    \begin{lemma}\label{5:T_diagonal}
    For any $\varepsilon > 0$ and any compact Borel disjoint sets $A, B\subset \R\setminus [-\varepsilon, \varepsilon]$ satisfying $\abs{x-y}>\varepsilon$ for any $x\in A$, $y\in B$ the equality
    $$
    \I_A(\CT^*\I_\pm\CT-\F^*\I_\pm\F)\I_B=0
    $$
    holds.
    \end{lemma}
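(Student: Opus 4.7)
The plan is to express the kernel of $\CT^*\I_{[0,R]}\CT$ in closed form by rescaling in \eqref{2_eq:CD_cont}, extract its $R\to\infty$ asymptotics using the classical large-argument expansion of ${}_1F_1$, and identify the limit with the off-diagonal kernel of $\F^*\I_+\F$. Substituting $(x,y)\to(Rx,Ry)$ in \eqref{2_eq:CD_cont} and changing variable $t\mapsto t/R$ give
\[
\int_0^R\overline{\CT(xt)}\CT(yt)\,dt = R\,\psi(Rx)K^s(Rx,Ry)\overline{\psi(Ry)} = R\,\psi(x)\overline{\psi(y)}\,K^s(Rx,Ry),
\]
where the second equality uses $\psi(Ru)=R^{-i\Im s}\psi(u)$ for $R>0$. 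Thus the kernel of $\CT^*\I_{[0,R]}\CT$ on $A\times B$ is $R\,\psi(x)\overline{\psi(y)}K^s(Rx,Ry)$.

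Next, I would invoke the two-term large-argument expansion
\[
Z_s(u) = \rho(u)^{-1}\overline{\psi(u)} + c_s\,u^{-1}e^{iu}\rho(u)^{-1}\psi(u) + O(|u|^{-2}),\quad |u|\to\infty,
\]
with an explicit constant $c_s$ read off from the standard asymptotics of ${}_1F_1(\bar{s};1+2\Re s;ix)$. Substituting into both numerator pieces of $K^s(Rx,Ry)$ and using $|\psi|\equiv 1$, the only non-oscillatory contribution of order $R^0$ reduces to $\frac{1}{2\pi i(y-x)}$, which is precisely the off-diagonal kernel of $\F^*\I_+\F$. Every remaining term is either bounded with a prefactor $R^{-k}$ for some $k\ge 1$, or carries an oscillatory factor of the form $e^{iR(x-y)}$, $e^{\pm iRx}$, or $e^{\pm iRy}$ multiplied by a bounded function of $x,y$.

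On the compact set $A\times B$, which is bounded away from the origin and the diagonal ($|x-y|\ge\varepsilon$), the expansion is uniform and each oscillatory phase is genuinely non-constant. Pairing with $f\in L_2(A)$, $g\in L_2(B)$ and applying the Riemann--Lebesgue lemma to each oscillatory contribution, together with the uniform $R$-boundedness supplied by Lemma \ref{4:boundedness}, yields the weak convergence $\I_A\CT^*\I_{[0,R]}\CT\I_B\to\I_A\F^*\I_+\F\I_B$. Since $\I_{[0,R]}\uparrow\I_+$, dominated convergence gives strong convergence $\CT^*\I_{[0,R]}\CT\to\CT^*\I_+\CT$, so the limit agrees with $\I_A\CT^*\I_+\CT\I_B$, proving the $+$ case. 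The $-$ case is identical on applying \eqref{2_eq:CD_cont} to $(-x,-y)$ and scaling, yielding the leading non-oscillatory contribution $-\frac{1}{2\pi i(y-x)}$, which is exactly the off-diagonal kernel of $\F^*\I_-\F$. The main obstacle will be the bookkeeping in the asymptotic step: there are four bilinear products in the numerator of $K^s(Rx,Ry)$, and one must verify that every cross-term either decays in $R$ or has a phase depending non-trivially on $(x,y)\in A\times B$; the separation hypotheses on $A,B$ are precisely what guarantees that every cross-phase is bounded away from zero on $A\times B$.
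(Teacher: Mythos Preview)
Your proposal is correct and follows essentially the same route as the paper: rescale \eqref{2_eq:CD_cont} to obtain the kernel of $\CT^*\I_{[0,R]}\CT$, insert the large-argument asymptotics of $Z_s$, recognize the non-oscillatory piece as the Hilbert kernel $\frac{1}{2\pi i(y-x)}$, and kill the oscillatory remainder by Riemann--Lebesgue on $A\times B$.

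Two minor efficiency points where the paper is leaner. First, you do not need the two-term expansion of $Z_s$: the one-term estimate $Z_s(u)\rho(u)\psi(u)=1+O(\abs{u}^{-1})$ (Lemma~\ref{4:hyper_est}) already suffices, because after the factor $R$ cancels against the $1/R$ in the denominator of $K^s(Rx,Ry)$, every correction from the $O(\abs{u}^{-1})$ term is $O(1/R)$ uniformly on $A\times B$. This reduces the bookkeeping to a single oscillatory cross-term $e^{iR(x-y)}\psi(x)^2\overline{\psi(y)}^2$, rather than the four bilinear products you anticipate. Second, the $\I_-$ case is obtained instantly by conjugating the $\I_+$ identity with the inversion $\mathcal{J}f(x)=f(-x)$, using $\mathcal{J}\CT=\CT\mathcal{J}$ and $\mathcal{J}\F=\F\mathcal{J}$; no separate asymptotic computation is needed.
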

    Lemma \ref{5:T_diagonal} is proved in Section \ref{4_sec:unit}.
    
    The assertion of Lemma \ref{5:T_diagonal} may be extended to arbitrary disjoint Borel sets $A, B\subset \R$ by continuity. By Proposition \ref{5:diag_mult} this implies that $\CT^*\CT=g$ for some $g\in L_\infty(\R)$. Observe, that the operator $\CT^*\CT$ is invariant under conjugation by the dilation operator $D_Rh(x)=h(x/R)$ for any $R\ne 0$. Thereby we conclude that $g=C\in\R$. The same holds for $\CT\CT^*$ since $\CT^*= \mathcal{J}\mathcal{T}_{\bar{s}}$, where $\mathcal{J}f(x)=f(-x)$. To show that $C=1$ it is sufficient to establish
    \begin{lemma}\label{5:T_norm}
    We have that $\norm{\CT\I_{[n, n+1]}}_{L_2}\to 1$ as $n\to \infty$.
    \end{lemma}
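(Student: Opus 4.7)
The plan is as follows. Since the preceding dilation argument shows that $\CT^*\CT=C\cdot I$ for some constant $C\geq 0$, we have $\norm{\CT\I_{[n,n+1]}}_{L_2}=\sqrt{C}$ independently of $n$, and the lemma reduces to showing $C=1$. Taking $n\to\infty$ is convenient because the large-argument asymptotic of $\CT$ then becomes effective. My strategy is to compare $\CT\I_{[n,n+1]}$ with the Fourier transform of a closely related unit-norm function and close the estimate via Plancherel.

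From the standard $\abs{y}\to\infty$ asymptotic of ${}_1F_1(\bar{s};1+2\Re s;iy)$ together with the definitions of $\rho$, $\psi$, and $Z_s$, I would extract an expansion
\[
\CT(y)=\frac{e^{-iy}}{\sqrt{2\pi}}V(y)+R(y),\qquad V(y):=e^{i\pi\Re s\,\sign y}\abs{y}^{2i\Im s},
\]
with $\abs{V(y)}=1$ and $\abs{R(y)}\leq C/\abs{y}$ for $\abs{y}\geq 1$. The crucial algebraic point is the factorization $V(\omega x)=V_0(\omega)\,x^{2i\Im s}$ valid for $x>0$, where $V_0(\omega):=e^{i\pi\Re s\,\sign\omega}\abs{\omega}^{2i\Im s}$ is unimodular. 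Setting $h_n(x):=x^{2i\Im s}\I_{[n,n+1]}(x)$, one has $\abs{h_n}=\I_{[n,n+1]}$ and $\norm{h_n}_{L_2}=1$. Substituting the asymptotic into the definition of $\CT\I_{[n,n+1]}$, for $n$ large,
\[
\CT\I_{[n,n+1]}(\omega)=V_0(\omega)\F h_n(\omega)+E_n(\omega),\qquad E_n(\omega):=\int_n^{n+1}R(\omega x)\,dx,
\]
and by Plancherel $\norm{V_0\F h_n}_{L_2}=\norm{h_n}_{L_2}=1$. The problem thus reduces to proving $\norm{E_n}_{L_2}\to 0$.

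I would then estimate $\norm{E_n}_{L_2}$ by splitting the $\omega$-integration at $\abs{\omega}=n^{-1/2}$. For $\abs{\omega}\geq n^{-1/2}$ one has $\abs{\omega x}\geq\sqrt{n}$ uniformly on $[n,n+1]$, so $\abs{R(\omega x)}\leq C/(n\abs{\omega})$, whence $\int_{\abs{\omega}\geq n^{-1/2}}\abs{E_n}^2\,d\omega=O(n^{-3/2})$. For $\abs{\omega}\leq n^{-1/2}$ I bound $\abs{E_n}\leq\abs{\CT\I_{[n,n+1]}}+\abs{\F h_n}$ and control each piece separately: the Fourier part contributes $O(n^{-1/2})$ via the trivial estimate $\abs{\F h_n}\leq\norm{h_n}_{L_1}/\sqrt{2\pi}$, while for the $\CT$ part the change of variables $y=\omega x$ combined with the near-origin bound $\abs{\CT(y)}\leq C\abs{y}^{\Re s}$ and the boundedness of $\CT$ away from zero again yields an $O(n^{-1/2})$ contribution.

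The main obstacle will be precisely this small-$\omega$ regime, where the infinity asymptotic degenerates and $\abs{\CT(\omega x)}$ can itself be singular as $\omega x\to 0$; the hypothesis $\Re s>-1/2$ is sharp, as it makes $\abs{y}^{2\Re s}$ integrable near zero and allows the estimate to close. The split scale $n^{-1/2}$ is chosen to balance the decay $1/(n\abs{\omega})$ of the remainder against the measure of the singular region.
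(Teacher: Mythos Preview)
Your proposal is correct and follows essentially the same route as the paper: both compare $\CT\I_{[n,n+1]}$ with a unimodular multiple of $\F\bigl(\I_{[n,n+1]}\abs{t}^{2i\Im s}\bigr)$ via the asymptotic $Z_s\rho\psi\to 1$ from Lemma~\ref{4:hyper_est}, and then show the $L_2$-norm of the difference tends to zero by splitting the frequency integral. The only cosmetic differences are that the paper first factors out $\overline{\psi^2}$ and splits at $\abs{\omega}=1/n$ (rather than $n^{-1/2}$), handling the small-frequency region directly with the full bound of Lemma~\ref{4:hyper_est} instead of your triangle-inequality detour through $\abs{\CT\I_{[n,n+1]}}+\abs{\F h_n}$.
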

    We prove Lemma \ref{5:T_norm} in Section \ref{4_sec:unit}.
    
    Let us explain how Lemma \ref{5:T_norm} yields that $C=1$. The identity $\CT^*\CT=C$ implies
    \[
    \norm*{\CT \I_{[n, n+1]}}_{L_2}^2 = \left\langle \CT  \I_{[n, n+1]}, \CT  \I_{[n, n+1]}\right\rangle_{L_2} =  \left\langle  \I_{[n, n+1]}, \CT^*\CT  \I_{[n, n+1]}\right\rangle_{L_2} = C.
    \]
    The constant $C$ does not depend on $n$ and converges to $1$ as $n\to\infty$. Hence $C=1$ identically.
    This finishes the proof of Theorem \ref{2:Diagonalization}.
    
    Theorem \ref{2:PW_spaces} similarly follows from Lemma \ref{5:T_diagonal}. Applying again Proposition \ref{5:diag_mult}, we have that for some $u_\pm\in L_\infty(\R)$ the difference is the operator of pointwise multiplication
    \[
    \CT^*\I_\pm\CT - \F^*\I_\pm\F = u_\pm.
    \]
    The above difference is invariant under the conjugation by $D_R$ for $R>0$. Applying conjugation by $D_{-1}$, we deduce that $u_+(x)=u_-(-x)$. Theorem \ref{2:Diagonalization} yields that $u_++u_-=0$. Thereby $u_+(x)=C_s\sign x$ for some $C_s$.
    
    Let us show that $C_s=0$. Consider a function $q(x)=(\CT^*\I_{[1/2, 1]})(x)$. By Lemma \ref{5:T_diagonal} we have
    \[
    (\F^*\I_+\F q)(x) = (1-C_s\sign x)q(x).
    \]
    The claim follows from the following statement.
    \begin{lemma}\label{5:T_szero}
    The function $q$ belongs to the space $\F^*L_2(\R_+)$.
    \end{lemma}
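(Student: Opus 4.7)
The plan is to show that $q$ extends to a function in the Hardy space $H^2(\mathbb{H})$ of the upper half-plane. By the Paley-Wiener theorem (Theorem \ref{5:PW_theorem} in the text), this is equivalent to $\hat q$ being supported in $[0,\infty)$, which is exactly $q\in\F^*L_2(\R_+)$. Since $q\in L_2(\R)$ already follows from the boundedness of $\CT^*$ (Lemma \ref{4:boundedness}), it remains to produce an analytic extension to $\mathbb{H}$ and verify a uniform $L_2$ bound on horizontal lines $\Im z = y\ge 0$.

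The key algebraic step is that the oscillating factor $e^{iu}$ in $\overline{\CT(u)}$ cancels after Kummer's transformation. A direct computation shows $\rho(u)\overline{\psi(u)}=(iu)^s$ in the principal branch on the real line, hence
\[
\overline{\CT(u)} = \frac{e^{iu}}{\sqrt{2\pi}}(-iu)^{\bar s}\Gam{1+\bar s}{1+2\Re s}{}_1F_1(s;1+2\Re s;-iu).
\]
Applying the Kummer identity ${}_1F_1(a;b;z)=e^z{}_1F_1(b-a;b;-z)$ with $a=s$ (so $b-a=1+\bar s$), the factor $e^{-iu}$ cancels $e^{iu}$ and one obtains
\[
\overline{\CT(u)} = \frac{1}{\sqrt{2\pi}}(-iu)^{\bar s}\Gam{1+\bar s}{1+2\Re s}{}_1F_1(1+\bar s;1+2\Re s;iu).
\]
The branch cut of $(-iu)^{\bar s}$ lies on the positive imaginary axis, disjoint from $\overline{\mathbb{H}}\setminus\{0\}$, and ${}_1F_1$ is entire, so this expression extends analytically to $u\in\mathbb{H}$. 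Consequently $q(z)=\int_{1/2}^{1}\overline{\CT(\omega z)}\,d\omega$ provides an analytic extension of $q$ to $\mathbb{H}$.

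For the $L_2$ bound I would exploit that $\Re(i\omega z)\le 0$ for $z\in\overline{\mathbb{H}}$ and $\omega>0$. The classical large-argument asymptotic
\[
{}_1F_1(1+\bar s;1+2\Re s;w) \sim \frac{\Gamma(1+2\Re s)}{\Gamma(s)}(-w)^{-1-\bar s},
\]
valid in the left half-plane, combines with the prefactor $(-i\omega z)^{\bar s}$ to give a clean $O(1/|\omega z|)$ at large $|z|$. Near the origin, ${}_1F_1(\cdot)\to 1$, yielding $O(|\omega z|^{\Re s})$. Integrating over $\omega\in[1/2,1]$ produces
\[
|q(z)| \le C\min\bra*{|z|^{\Re s},\,|z|^{-1}},\qquad z\in\overline{\mathbb{H}}\setminus\{0\},
\]
which is square-integrable in $\Re z$ uniformly in $\Im z\ge 0$ because $2\Re s>-1$. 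This yields $q\in H^2(\mathbb{H})$.

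The main obstacle is making the ${}_1F_1$ asymptotic expansion uniform for $z$ approaching the imaginary axis, where $\arg(i\omega z)\to\pm\pi/2$ is the transitional direction between dominance of the two classical asymptotic terms. This is handled by the standard sectorial expansions with controlled error bounds and a compactness argument covering intermediate $|z|$, together with the continuity of $\overline{\CT(u)}$ up to the real axis away from $0$.
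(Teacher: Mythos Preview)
Your overall strategy---analytic continuation plus the Paley--Wiener characterization of $H^2(\mathbb{H})$---is exactly the one the paper uses, and your use of Kummer's identity to exhibit the analytic extension of $\overline{\CT(u)}$ to $\mathbb{H}$ is equivalent to the paper's appeal to Corollary~\ref{2:H_decomposition}. (Minor slip: the branch cut of $u\mapsto(-iu)^{\bar s}$ lies on the \emph{negative} imaginary axis, not the positive one; your conclusion that it avoids $\overline{\mathbb{H}}\setminus\{0\}$ is nonetheless correct.)

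The genuine gap is in the growth bound. The one-term asymptotic ${}_1F_1(1+\bar s;1+2\Re s;w)\sim\frac{\Gamma(1+2\Re s)}{\Gamma(s)}(-w)^{-1-\bar s}$ is not uniform as $w=i\omega z$ approaches the imaginary axis, i.e.\ as $z$ approaches $\R$. On and near the real line the second term $\frac{\Gamma(1+2\Re s)}{\Gamma(1+\bar s)}e^{w}w^{-s}$ of the full expansion~\eqref{A_eq:asymp} does not decay: $|e^{i\omega z}|\le 1$, while the algebraic factors combine with the prefactor $(-i\omega z)^{\bar s}$ to something of modulus $O(1)$. Hence $|\overline{\CT(u)}|$ is only $O(1)$ for large real $u$ (consistent with Lemma~\ref{4:hyper_est}), and the pointwise bound $|q(z)|\le C|z|^{-1}$ does not follow. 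You flag the transitional direction in your last paragraph, but the obstruction is not a matter of uniformity of the error term or of a compactness patch at intermediate $|z|$: the \emph{leading} behaviour itself changes near $\R$, and no purely pointwise estimate on the integrand can yield $|q(x)|\le C/|x|$.

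The paper's remedy is to keep both asymptotic terms and subtract the oscillatory one: with $g(z)=e^{-\pi\Im s}z^{-2i\Im s}\int_{1/2}^1 e^{itz}t^{-2i\Im s}\,dt$ one has $g\in H^2(\mathbb{H})$ directly (Plancherel on each horizontal line, together with the boundedness of $|z^{-2i\Im s}|$ on $\mathbb{H}$), while the uniform estimate~\eqref{5_eq:H_est} gives a pointwise bound $C|tz|^{\Re s}/(1+|tz|^{1+\Re s})$ on the remainder, which is square-integrable in $\Re z$ uniformly in $\Im z\ge 0$. An alternative consistent with your sketch would be to retain the two-term expansion and recover the $O(1/|z|)$ decay for the oscillatory piece \emph{after} integration in $\omega$, via a single integration by parts against $e^{i\omega z}$. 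Either way, the missing ingredient is that the decay at infinity comes from the $\omega$-integration (or equivalently from Plancherel), not from a pointwise bound on $\overline{\CT}$.
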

    See Section \ref{5_sec:H_dec} for the proof of Lemma \ref{5:T_szero}.
    
    To conclude the proof of Theorem \ref{2:PW_spaces} recall that by the Uniqueness Theorem for the Hardy space (see \cite[Corollary~4.2]{G_07}) and the Paley-Wiener Theorem any subset of positive measure of $\R$ is a uniqueness set for $\F^*L_2(\R_+)$. It follows that
    \[
    C_s(1+\sign x)q(x)\in\F^*L_2(\R_+).
    \]
    The function above is zero on $\R_-$ and is therefore zero identically by the Uniqueness Theorem. Observe, however, that the latter holds only if $C_s=0$ by the unitarity of $\CT^*$ and the first claim of Corollary \ref{2:H_decomposition}.

    \subsection{Structure of the paper}
    The rest of the paper has the following structure. In Section \ref{3_sec:CD_limit} we prove Lemma \ref{5:ker_conv} and conclude that the identity \eqref{2_eq:CD_cont} holds. Using the convergence asserted in the lemma we deduce the boundedness of $\CT^*$, $\CT$ (see Lemma \ref{4:boundedness}). In Section \ref{4_sec:unit} we prove Proposition \ref{5:diag_mult} and Lemmata \ref{5:T_diagonal}, \ref{5:T_norm}, which imply Theorem \ref{2:Diagonalization}. In Section \ref{5_sec:H_dec} we prove Corollary \ref{2:H_decomposition} from Theorem \ref{2:Diagonalization} and deduce Lemma \ref{5:T_szero} from Corollary \ref{2:H_decomposition}, which concludes the proof of Theorem \ref{2:PW_spaces}. Section \ref{6_sec:WH_fact} is devoted to the proof of Corollary \ref{2:WH_factorization} from Theorem \ref{2:PW_spaces}.

    \section{Asymptotic of the Christoffel-Darboux kernel}\label{3_sec:CD_limit}
    In this section we prove Lemma \ref{5:ker_conv} and deduce the boundedness of $\CT$. Let $\{\Phi_n\}_{n\ge 0}$ stand for the monic orthogonal polynomials with respect to the weight $w_s$. See Theorem \ref{A:orth_formula} for explicit formulas. Recall that the Stirling formula \cite[6.1.41]{AS_64} asserts that for $\abs{\arg z}<\pi$ we have as $\abs{z}\to \infty$
    \[
\ln \Gamma(z) = \left(z-\frac{1}{2}\right)\ln z-z+\frac{1}{2}\ln \pi + O(z^{-1}).
\]
For $a, b\in\mathbb{C}$ such that $\Re a>0$ there exists a constant $A$ such that the inequality
\begin{equation}\label{3_eq:stir_form}
\abs*{(1+x)^{b-a}\Gam{a+x}{b+x} -1}\le\frac{A}{1+x}
\end{equation}
holds for all $x\ge 0$.
    \begin{lemma}\label{3:1_conv}
    We have
    \[
    \bra*{1+[nx]}^{-s}\Phi_{[nx]}\bra*{e^{iy/n}} \to e^{ixy}\overline{Z_s(xy)}, \quad\text{ as } n\to\infty
    \]
    locally uniformly for $(x, y)\in (0, \infty)\times \R$.
    The estimate
    \[
    \abs*{\bra*{1+[nx]}^{-s}\Phi_{[nx]}\bra*{e^{iy/n}}} =O(1), \quad\text{ as } n\to\infty
    \]
    holds locally uniformly for $(x, y)\in [0, \infty)\times \R$ .
    \end{lemma}
    \begin{proof}
   By the formula \eqref{3_eq:stir_form} the following bound
    \[
    \bra*{1+[nx]}^{-s}\Gam{1+2\Re s +[nx]}{1+[nx]+\bar{s}} = 1+O\bra*{\frac{1}{1 + [nx]}}, \quad \text{ as }n\to\infty
    \]
    holds uniformly for $x\ge 0$.
   From the integral representation \eqref{A_eq:int_2} we deduce
   \[
   \FT{-[nx]}{1+\bar{s}}{1+2\Re s}{1-e^{iy/n}} = \Gam{1+2\Re s}{1+\bar{s}, s}\int_0^1t^{\bar{s}}(1-t)^{s-1}\bra*{1-t\bra*{1-e^{iy/n}}}^{[nx]}dt,
   \]
   where
    \begin{align*}
  \bra*{1-t\bra*{1-e^{-iy/n}}}^{[nx]} &= \exp\bra*{[nx]\ln (1 + \frac{ity}{n} + O(ty^2/n^2))}\\
  &=\exp\bra*{[nx]\bra*{\frac{ity}{n} + O(ty^2/n^2)+O(t^2y^2/n^2)}}\\
  &= \exp(ityx)(1 + O(txy^2/n)).
  \end{align*}
 Using the integral representation \eqref{A_eq:int} we conclude that locally uniformly on $[0, \infty)\times \R$
\[
  \FT{-[nx]}{\bar{s}+1}{2\Re s + 1}{1-e^{iy/n}} \to \FO{\bar{s}+1}{2\Re s + 1}{ixy}.
\]
A direct substitution into the formula given in Theorem \ref{A:orth_formula} and application of Kummer's formula \eqref{A_eq:kummer} finish the proof of the convergence.
    \end{proof}
\begin{lemma}\label{3:2_conv}
The following convergence
\[
\norm{\Phi_{[nx]}}_{L_2}^{-2}\to \Gam{1+2\Re s}{1+s, 1+\bar{s}},\quad 
n^{\Re s}\sqrt{w_s\bra*{e^{iy/n}}}\to \frac{1}{\sqrt{2\pi}}\sqrt{\Gam{1+s, 1+\bar{s}}{1+2\Re s}}\rho(y), \quad \text{ as }n\to\infty
\]
takes place locally uniformly for $(x, y)\in (0, \infty)\times \R$.
The bounds
\[
\norm{\Phi_{[nx]}}_{L_2}^{-2}=O(1), \quad \abs*{n^{\Re s}\sqrt{w_s\bra*{e^{iy/n}}}}=O\bra*{\abs{y}^{\Re s}}, \quad \text{ as }n\to \infty.
\]
hold locally uniformly for $(x, y)\in [0, \infty)\times \R$.
\end{lemma}
\begin{proof}
The formula \eqref{3_eq:stir_form} yields
\[
\Gam{1+s+[nx], 1+\bar{s}+[nx]}{1+[nx], 1+2\Re s+[nx]} = 1+O\bra*{\frac{1}{1+[nx]}}.
\]
The convergence of the norm follows from Theorem \ref{A:orth_formula}.

For the weight we have
\begin{multline*}
n^{\Re s}(1 - e^{iy/n})^{\bar{s}/2}(1-e^{-iy/n})^{s/2} = n^{\Re s}(-iy/n)^{\bar{s}/2}(iy/n)^{s/2}(1 + O(sy/n))
=\\
= \rho(y)(1 + O(sy/n)).
\end{multline*}
\end{proof}
\begin{proof}[Proof of Lemma \ref{5:ker_conv}]
Substituting the formulae from Lemmata \ref{3:1_conv}, \ref{3:2_conv}, we have the convergence
\[
\frac{n^{\Re s}}{\bra*{1+[nx]}^s}\sqrt{w_s\bra*{e^{iy/n}}}\varphi_{[nx]}\bra*{e^{iy/n}} = \frac{n^{\Re s}}{\bra*{1+[nx]}^s}\sqrt{w_s\bra*{e^{iy/n}}}\frac{\Phi_{[nx]}\bra*{e^{iy/n}}}{\norm{\Phi_{[nx]}}_{L_2}}\to \rho(y)\frac{e^{ixy}}{\sqrt{2\pi}}\overline{Z_s(xy)}
\]
as $n\to\infty$. This convergence is locally uniform for $(x, y)\in (0, \infty)\times \R$.
The asymptotic behaviour
\[
\frac{\bra*{1+[nx]}^s}{n^{\Re s}} \sim [nx]^{i\Im s}\abs{x}^{\Re s}, \text{ as }n\to\infty
\]
takes place locally uniformly for $x\in (0, +\infty)$.
Finally, the bound
\[
\abs*{\frac{\bra*{1+[nx]}^s}{n^{\Re s}}}=O\bra*{1+\abs{x}^{\Re s}}
\]
holds uniformly for $x\in [0, \infty)$.
To conclude the proof it is remaining to observe that for $x>0$ we have
$\abs{x}^{\Re s} \rho(y) = \rho(xy)$.

\end{proof}

Let us show how boundedness of $\CT^*$ follows from Lemma \ref{5:ker_conv}
\begin{lemma}\label{4:boundedness}
The operators $\CT^*$ and $\CT$ extend by continuity to bounded operators.
\end{lemma}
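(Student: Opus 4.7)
The strategy is to realize $\CT$ as a pointwise limit of finite-$n$ operators whose uniform boundedness comes from an orthogonal projection via the Christoffel--Darboux formula, and then to transfer the resulting one-sided bound to all of $L_2(\R)$ using the dilation and reflection symmetries of $\CT$.

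For each $n$ consider the integral operator $T_n\colon L_2(-\pi n,\pi n)\to L_2[0,1]$ with kernel $\CT^n(y,t)$. The Christoffel--Darboux representation \eqref{5_eq:int_repr} rewrites $T_n^*T_n$ as the operator on $L_2(-\pi n,\pi n)$ with kernel $\frac{1}{n}K_n(e^{ix/n},e^{iy/n})$, which is conjugate, via the isometry $V_n\colon L_2(\T,d\theta)\to L_2(-\pi n,\pi n)$, $V_nh(x)=n^{-1/2}h(e^{ix/n})$, to the orthogonal projection on $L_2(\T,d\theta)$ with range $\sqrt{w_s}\cdot\mathrm{span}\{1,z,\ldots,z^{n-1}\}$. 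Hence $T_n^*T_n$ is itself an orthogonal projection and $\norm{T_n}=1$.

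Fix a compactly supported $f\in L_1(\R)\cap L_\infty(\R)$. Lemma~\ref{5:ker_conv} supplies the pointwise convergence $\CT^n(y,t)\to\CT(ty)\psi(ty)$ on $\R\times(0,\infty)$ together with the domination $\abs{\CT^n(y,t)}\le C\abs{t}^{\Re s}(1+\abs{y}^{\Re s})$; since $\abs{t}^{\Re s}\in L_2[0,1]$ under the assumption $\Re s>-1/2$, dominated convergence yields $T_nf\to F$ in $L_2[0,1]$, where $F(t)=\int\CT(ty)\psi(ty) f(y)\,dy$. Hence $\norm{F}_{L_2[0,1]}\le\norm{f}_{L_2}$. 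The identity $\psi(ty)=t^{-i\Im s}\psi(y)$ for $t>0$ together with $\abs{\psi}=1$ rewrites this as $\norm{\I_{[0,1]}\CT g}_{L_2}\le\norm{g}_{L_2}$ for every compactly supported $g\in L_1(\R)\cap L_\infty(\R)$ (take $g=\bar\psi f$).

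The substitution $g(x)=h(x/R)$ gives $\CT g(\omega)=R\,\CT h(R\omega)$, so the $\I_{[0,1]}$-bound upgrades to $\norm{\I_{[0,R]}\CT h}_{L_2}\le\norm{h}_{L_2}$ for every $R>0$; monotone convergence then yields $\norm{\I_+\CT h}_{L_2}\le\norm{h}_{L_2}$. The reflection identity $\CT(h(-\cdot))(\omega)=\CT h(-\omega)$ handles the negative half-line, so $\norm{\CT h}_{L_2}\le\sqrt{2}\norm{h}_{L_2}$, and $\CT$ extends by continuity to $L_2(\R)$; the boundedness of $\CT^*$ follows by duality. The main obstacle is the last step: the finite-$n$ picture produces only an $L_2[0,1]$-bound, and reaching an $L_2(\R)$-bound requires combining the dilation symmetry with a separate reflection argument, since the Christoffel--Darboux construction has no natural $\R_-$ analog.
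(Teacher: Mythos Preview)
Your argument is correct and mirrors the paper's: both exploit that the finite-$n$ approximants have operator norm at most $1$ by the orthonormality of $\sqrt{w_s}\,\varphi_j$ in $L_2(\T)$, pass to the limit via Lemma~\ref{5:ker_conv}, and then reach the other half-line by the reflection symmetry $\mathcal J\CT=\CT\mathcal J$. The only difference is that the paper bounds $\CT^*$ rather than $\CT$---reading $\CT^{n*}$ directly as a partial isometry from step functions on $[\varepsilon,b]$ into $L_2(-\pi n,\pi n)$---so its codomain exhausts $L_2(\R)$ automatically and no dilation step is needed, whereas your fixed output interval $[0,1]$ forces the additional dilation-plus-monotone-convergence argument; note also that the phase you extract should give $g=\psi f$ rather than $g=\bar\psi f$, though this slip is harmless since $\abs{\psi}=1$.
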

\begin{proof}
It is sufficient to establish the assertion for $\CT^*$. Let $h$ be a Borel bounded function supported on $[\varepsilon, b]\subset \R_+$ for some $\varepsilon >0$. We have
\[
\norm{\CT^*h}^2_{L_2} = \lim_{k\to\infty}\norm{\I_{[-k, k]}\CT^*h}^2_{L_2},
\]
where using Lemma \ref{5:ker_conv} we express the norm as follows
\[
\norm{\I_{[-k, k]}\CT^*h}^2_{L_2} = \lim_{n\to\infty}\norm{\I_{[-k, k]}\CT^{n*}h}^2_{L_2}, \quad (\I_{[-k, k]}\CT^{n*}h)(x) = \I_{[-k, k]}(x)\int_\varepsilon^b\overline{\CT^n(x, y)}h(y)dy.
\]
Observe that the operator $\CT^{n*}$ is a partial isometry, with orthogonal complement to the kernel spanned on the indicator functions $\I_{[l/n, (l+1)/n]}$ for $l\in\Z_{\ge 0}$. On the latter this operator acts by 
\[
\I_{[l/n, (l+1)/n]}(y)\mapsto [l]^{-i\Im s}\frac{1}{n}\sqrt{w_s\bra*{e^{iy/n}}}\varphi_l\bra*{e^{iy/n}}
\]
 and preserves the norm. Thereby we have
\[
\norm{\I_{[-k, k]}\CT^{n*}h}^2_{L_2}\le \norm{h}^2_{L_2}.
\]

We have shown that $\CT^*$ extends to a contraction on $L_2(\R_+)$. Since $\mathcal{J}\CT^*=\CT^*\mathcal{J}$, where $\mathcal{J}f(x) = f(-x)$, we have that $\CT^*$ extends to a contraction $L_2(\R_-)$. We conclude that it extends to a bounded operator on $L_2(\R)$ with the norm of at most $2$.
\end{proof}

\section{Unitarity of $\CT$}\label{4_sec:unit}
In this section we conclude the unitarity of $\CT$ by proving Proposition \ref{5:diag_mult} and Lemmata \ref{5:T_diagonal}, \ref{5:T_norm}.
\begin{proof}[Proof of Proposition \ref{5:diag_mult}]
Let an operator $J$ satisfy the condition of the proposition. Define a Borel function $h$ for any bounded Borel set $B\subset\R$ by
\[
h(x) = (J\I_B)(x), \text{for }x\in B.
\]
Observe that its definition does not depend on the choice of $B$: for bounded Borel sets $B_1, B_2\subset \R$ we have
\[
(J\I_{B_1})(x)=(J\I_{B_2})(x), \text{ for }x\in B_1\cap B_2.
\]

Recall that we denote functions and the respective operators of pointwise multiplication by one symbol.
The assumption of the proposition implies that the equalities of operators
\begin{align*}
&J\I_B = \I_BJ\I_B + \I_{\R\setminus B}J\I_B = \I_BJ\I_B,\\
&\I_B J = \I_B J\I_B + \I_BJ\I_{\R\setminus B} = \I_BJ\I_B
\end{align*}
take place. Therefore, the identity
\[
\I_{B_1}\I_{B_2}J\I_{B_2}= \I_{B_1}\I_{B_2}J\I_{B_1} = \I_{B_1}\I_{B_2}J\I_{B_1\cap B_2}.
\]
holds. We conclude that the function $h(x)\in L_\infty(\R)$ is well-defined. 

The argument above implies that $J$ commutes with multiplications by indicator functions.
Via extension by continuity, the operator $J$ commutes with all pointwise multiplication operators. 
Therefore, for a bounded Borel set $B\subset\R$ and $f\in L_\infty(B)$ we have
\[
(Jf)(x) = (Jf\I_B)(x)=f(x)(J\I_B)(x) = h(x)f(x).
\]
We conclude that $J=h$.
\end{proof}

Before diving into calculations let us establish a convenient asymptotic formula. Recall the notation
\[
\rho(x)=\abs{x}^{\Re s}e^{-\frac{\pi}{2}\Im s\sign x} ,\quad \psi(x)=e^{-\frac{i\pi}{2}\Re s\sign x}\abs{x}^{-i\Im s}.
\]
\begin{lemma}\label{4:hyper_est}
For a certain constant $C$, we have the bound
\[
\abs*{Z_s(x)\rho(x)\psi(x)-1}\le \begin{cases}
\frac{C\abs{x}^{\Re s}}{1+\abs{x}^{1+\Re s}}, \text{ if }\Re s < 0,\\
\frac{C}{1+\abs{x}}, \text{ otherwise}.
\end{cases}
\]
\end{lemma}
\begin{proof}
Indeed, substituting the expansion \eqref{A_eq:asymp}, we have as $x\to +\infty$
\begin{multline*}
Z_s(x) = \Gam{1+s}{1+2\Re s}\FO{\bar{s}}{1+2\Re s}{ix}=e^{i\pi\bar{s}}\abs{x}^{-\bar{s}}e^{-\frac{i\pi}{2}\bar{s}}\bra*{1+O(\abs{x}^{-1})} +\\
+\Gam{1+s}{\bar{s}}e^{ix}\abs{x}^{-1-s}e^{-(1+s)\frac{i\pi}{2}}\bra*{1+O(\abs{x}^{-1})} = \frac{1}{\rho(x)\psi(x)}\bra*{1+O(\abs{x}^{-1})}.
\end{multline*}
As $x\to -\infty$ we similarly have
\begin{multline*}
Z_s(x) = \Gam{1+s}{1+2\Re s}\FO{\bar{s}}{1+2\Re s}{ix}=e^{-i\pi\bar{s}}\abs{x}^{-\bar{s}}e^{\frac{i\pi}{2}\bar{s}}\bra*{1+O(\abs{x}^{-1})} +\\
+\Gam{1+s}{\bar{s}}e^{-ix}\abs{x}^{-1-s}e^{(1+s)\frac{i\pi}{2}}\bra*{1+O(\abs{x}^{-1})} = \frac{1}{\rho(x)\psi(x)}\bra*{1+O(\abs{x}^{-1})}.
\end{multline*}

We conclude that as $\abs{x}\to\infty$ the asymptotic expression
\[
Z_s(x)\psi(x)\rho(x) = 1 + O\bra*{\frac{1}{\abs{x}}}
\]
holds, which implies the claim for large $x$. For small $x$ we have
\[
Z_s(x)\rho(x)\psi(x) - 1 = O(1+\abs{x}^{\Re s}), \text{ as }x\to 0,
\]
which concludes that the assertion holds for all $x\in\R$.
\end{proof}

\begin{proof}[Proof of Lemma \ref{5:T_diagonal}]
We show that 
$$\I_A\CT^*\I_+\CT\I_B=\I_A\F^*\I_+\F\I_B.$$
The assertion for $\I_-$ follows from conjugating the identity by the inversion operator $\mathcal{J}f(x)=f(-x)$.

Let $h_1, h_2$ be Borel bounded functions supported on $A$ and $B$ respectively. It is sufficient to prove that 
$$
\langle h_1, \CT^*\I_+\CT h_2\rangle_{L_2}=\langle h_1, \F^*\I_+\F h_2\rangle_{L_2}.
$$
By Lemma \ref{4:boundedness} we have that 
\[
\CT^*\I_+\CT = \underset{R\to+\infty}{\wlim}\CT^*\I_{[0, R]}\CT,
\]
which implies
\[
\langle h_1, \CT^*\I_+\CT h_2\rangle_{L_2} = \lim_{R\to\infty}\langle h_1, \CT^*\I_{[0, R]}\CT h_2\rangle_{L_2} = \int_{A\times B}\overline{h_1(x)}h_2(y)\bra*{\int_{0}^R\overline{\CT(xt)}\CT(yt)dt}dxdy.
\]
The identity \eqref{2_eq:CD_cont} yields
\[
\int_{0}^R\overline{\CT(xt)}\CT(yt)dt = R\psi(Rx)\overline{\psi(Ry)}K^s(Rx, Ry).
\]

Recall the following assumption on the Borel sets $A, B$: there exists some $\varepsilon >0$ such that for any $(x, y)\in A\times B$ the inequalities $\abs{x}>\varepsilon$, $\abs{y}>\varepsilon$ and $\abs{x-y}>\varepsilon$ hold. This assumption justifies direct substitution of the asymptotic from Lemma \ref{4:hyper_est} into the expression \eqref{1_eq:CHK_def} for $K^s(Rx, Ry)$. We deduce the formula
\[
\int_{0}^R\overline{\CT(xt)}\CT(yt)dt = \frac{1}{2\pi i(x-y)}\bra*{1-e^{iR(y-x)}\left[\psi(Rx)\overline{\psi(Ry)}\right]^2} + o(1), \quad \text{ as }R\to\infty
\]
where the $o$-term is uniform for $(x, y)\in A\times B$. Observe that the function $\overline{\psi(Ry)}\psi(Rx) = \overline{\psi(x)}\psi(y)$ does not depend on $R$. Consequently, the Riemann-Lebesgue Lemma yields
\begin{multline*}
 \int_{A\times B}\overline{h_1(x)}h_2(y)\bra*{\int_{0}^R\overline{\CT(xt)}\CT(yt)dt}dxdy \\
 =  \int_{A\times B}\overline{h_1(x)}h_2(y)\frac{1}{2\pi i(y-x)}dxdy + o(1), \text{ as } R\to+\infty,
 \end{multline*}
 where the right-hand side equals $\langle h_1, \F^*\I_+\F h_2\rangle_{L_2}$.
\end{proof}

\begin{proof}[Proof of Lemma \ref{5:T_norm}]
By the definition we have
\[
(\CT \I_{[n, n+1]})(x) = \int_n^{n+1}\frac{e^{-ixt}}{\sqrt{2\pi}}Z_s(xt)\rho(xt)\psi(xt)\overline{\psi(xt)^2}dt = \overline{\psi(x)^2}h_n(x),
\]
where
\[
h_n(x) = \int_n^{n+1}\frac{e^{-ixt}}{\sqrt{2\pi}}Z_s(xt)\rho(xt)\psi(xt)t^{2i\Im s}dt.
\]
It is straightforward that $\norm{\CT \I_{[n, n+1]}}_{L_2} = \norm{h_n}_{L_2}$. Introduce
\[
\Delta_n(x) = h_n(x)-\F\left[\I_{[n, n+1]}(t)t^{2i\Im s}\right](x) = \int_n^{n+1}\frac{e^{-ixt}}{\sqrt{2\pi}}\bra*{Z_s(xt)\rho(xt)\psi(xt)-1}t^{2i\Im s}dt
\]
Since $\F$ is unitary, it is sufficient to establish that $\norm{\Delta_n}_{L_2}\to 0$ as $n\to\infty$. Assume first that $\Re s \le 0$. By Lemma \ref{4:hyper_est} we have the estimate
\[
\abs{\Delta_n(x)}\le C\int_n^{n+1}\frac{\abs{xt}^{\Re s}}{1+\abs{xt}^{1+\Re s}}dt = \frac{C}{\abs{x}}\int_{\abs{x}n}^{\abs{x}(n+1)}\frac{t^{\Re s}}{1+t^{1+\Re s}}dt.
\]

For $\abs{x}\ge 1/n$ we may bound the numerator by $1$ to derive that
\[
\abs{\Delta_n(x)}\le \frac{C}{\abs{x}}\int_{\abs{x}n}^{\abs{x}(n+1)}\frac{1}{1+t}dt \le \frac{C}{n\abs{x}}.
\]
Thereby $\norm{\Delta_n\I_{\abs{x}\ge n}}_{L_2}\le C\sqrt{\frac{2}{n}}\to 0$ as $n\to \infty$. For $\abs{x}\le 1/n$ we use the bound
\[
\abs{\Delta_n(x)}\le \frac{C}{\abs{x}}\int_{\abs{x}n}^{\abs{x}(n+1)}t^{\Re s}dt = \frac{C\abs{x}^{\Re s}}{1+\Re s}\bra*{(n+1)^{1+\Re s}-n^{1+\Re s}},
\]
which concludes that as $n\to \infty$
\[
\norm{\Delta_n\I_{\abs{x}\le 1/n}}_{L_2}\le C\sqrt{\frac{2}{(1+2\Re s)n}}\to 0.
\]

In the case $\Re s \ge 0$ we have the bound
\[
\abs{\Delta_n(x)}\le C\int_n^{n+1}\frac{1}{1+\abs{xt}}dt.
\]
The above argument for $\Re s=0$ implies that $\norm{\Delta_n}_{L_2}\to 0$ as $n\to\infty$ in this case as well.
\end{proof}

\section{Proof of Corollary \ref{2:H_decomposition} and Lemma \ref{5:T_szero}}\label{5_sec:H_dec}
\begin{proof}[Proof of Corollary \ref{2:H_decomposition}]
The first statement follows from Theorem \ref{2:Diagonalization} directly.

By Theorem \ref{2:Diagonalization} any function $f\in\mathcal{PW}_s$ may be expressed by the formula
\begin{equation}\label{5_eq:anal_factor}
f(x) = \frac{1}{\sqrt{2\pi}}\rho(x)\int_0^1 e^{ixy}Z_{\bar{s}}(-xy)y^{\bar{s}}g(y)dy = \frac{1}{\sqrt{2\pi}}\rho(x)h_f(x)
\end{equation}
for the function $g(x) = (\CT f)(x)$, which is supported on $[0, 1]$ by the first statement. By the Cauchy-Bunyakovsky-Schwarz inequality the function $\abs{y}^{\bar{s}}g(y)$ is absolutely integrable. The Morera Theorem yields that the function $h_f$ has holomorphic extension to $\C$. This proves the second claim.

To prove the third statement observe that the condition $f\in H^{(s, n)}$ is equivalent to
\[
\int_\gamma z^{-k}h_f(z)dz=0, \text{ for }k=1,\ldots, n
\]
for the contour $\gamma$ encircling the origin. Substituting the expression \eqref{5_eq:anal_factor} for $h_f$ into the formula above, we deduce
\[
\int_\gamma z^{-k}h_f(z)dz = \int_0^1 \bra*{\int_\gamma z^{-k}e^{izy}Z_{\bar{s}}(-zy)dz}y^{\bar{s}}g(y)dy = 2\pi iL_{k-1}\int_0^1y^{k-1}y^{\bar{s}}g(y)dy=0,
\]
where $L_{k-1}$ is the $k-1$-st coefficient of the Taylor expansion of $e^{iz}Z_{\bar{s}}(-z)$ at the origin. We conclude that $f\in H^{(s, n)}$ if and only if the function $g(x) = (\CT f)(x)$ is orthogonal to $y^{j+s}\I_{[0, 1]}(y)$ for $j=0, \ldots, n-1$. This proves the third claim.

Let us show that the spaces $L^{(s, n)}$, which are orthogonal complements of $H^{(s, n+1)}$ in $H^{(s, n)}$, are spanned by
\[
\mathcal{L}_{(s, n)} = \psi^*\CT^*\bra*{\I_{[0, 1]}(t)t^sP_n^{(2\Re s)}(t)},
\]
where $\left\{P_n^{(2\Re s)}\right\}_{n\ge 0}$ are orthogonal polynomials with respect to the weight $t^{2\Re s}$ on $[0, 1]$. Indeed, by the definition of these polynomials they satisfy
\[
\int_0^1 y^{j+\bar{s}}y^sP_n^{(2\Re s)}(y)dy = 0, \text{ for }j=0, \ldots, n-1.
\]
Therefore, the function $\mathcal{L}_{(s, n)}$ belongs to $H^{(s, n)}$. Moreover, the function $\I_{[0, 1]}t^sP_n^{(2\Re s)}(t)$ belongs to the space
\[
F^{(s, n)} = \I_{[0, 1]}(t)t^s\spann\langle 1, t, \ldots, t^n\rangle\subset L_2[0, 1].
\]
By the third claim the image of $H^{(s, n+1)}$ under the operator $\CT\psi$ is orthogonal to the space $F^{(s, n)}$. We conclude that the function $\mathcal{L}_{(s, n)}$ is orthogonal to the space $H^{(s, n+1)}$. This completes the proof of the last statement.
\end{proof}

To prove Lemma \ref{5:T_szero} recall a different characterization of the Hardy space. Define
\[
H^2(\mathbb{H}) =\left \{f\in\mathcal{H}(\mathbb{H}): \sup_{\delta >0}\int_\R\abs{f(x+i\delta)}^2dx<\infty\right\},
\]
where $\mathcal{H}(\mathbb{H})$ stands for the space of functions, which are holomorphic on the upper half-plane $\mathbb{H}$.
\begin{theorem}[The Paley-Wiener Theorem, see {\cite[Theorem~19.2]{R_86}}]\label{5:PW_theorem}
The Hardy space $H^2(\mathbb{H})$ coincides with the space $\F^*L_2(\R_+)$ in the following sense. For any function $f\in H^2(\mathbb{H})$ there exists a function $F\in L_2(\R_+)$ such that
\[
f(x+i\delta) = \int_0^\infty e^{-\omega\delta}F(\omega)e^{i\omega x}d\omega.
\]
\end{theorem}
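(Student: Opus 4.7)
The plan is to prove the identity $H^2(\mathbb{H}) = \F^* L_2(\R_+)$ by separately establishing each inclusion. The forward direction is a Plancherel estimate on the obvious analytic extension; the reverse reduces, via the half-plane Cauchy integral formula, to a single residue calculation that exposes the one-sided spectral support.

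For $\F^* L_2(\R_+) \subset H^2(\mathbb{H})$: given $F \in L_2(\R_+)$, set
\[
f(z) = \frac{1}{\sqrt{2\pi}}\int_0^\infty F(\omega) e^{i\omega z}\, d\omega, \quad z \in \mathbb{H}.
\]
For $\Im z = \delta > 0$, the factor $e^{-\omega\delta}$ hidden in $e^{i\omega z}$ provides absolute convergence and allows differentiation under the integral sign, so $f$ is holomorphic on $\mathbb{H}$. By Plancherel, $\norm{f(\cdot+i\delta)}_{L_2(\R)} = \norm{F(\omega)e^{-\omega\delta}}_{L_2(\R_+)}\le\norm{F}_{L_2}$, so $f\in H^2(\mathbb{H})$, and one checks $f(\cdot+i\delta)\to\F^*F$ in $L_2$ as $\delta\to 0^+$.

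For the converse $H^2(\mathbb{H}) \subset \F^* L_2(\R_+)$: given $f \in H^2(\mathbb{H})$, standard Hardy space theory provides $L_2$ boundary values $f^* = \lim_{\delta\to 0^+} f(\cdot+i\delta)$ and the half-plane Cauchy representation
\[
f(z) = \frac{1}{2\pi i}\int_\R \frac{f^*(t)}{t-z}\, dt, \quad z \in \mathbb{H}.
\]
This expresses $f(\cdot + i\delta)$ as the convolution $f^* * K_\delta$ with $K_\delta(y) = -\frac{1}{2\pi i(y+i\delta)}$. Closing the contour in the lower half plane around the simple pole $y = -i\delta$ gives $\F K_\delta(\omega) = \frac{1}{\sqrt{2\pi}}e^{-\omega\delta}\I_{\omega > 0}$ (the integral vanishes for $\omega < 0$ by closing above, where the integrand is holomorphic), so the convolution theorem yields
\[
\F[f(\cdot + i\delta)](\omega) = F(\omega)\, e^{-\omega\delta}\, \I_{\omega > 0}, \quad F := \F f^*.
\]
Letting $\delta \to 0^+$, the left-hand side tends to $F$ in $L_2$, so $F = F \cdot \I_{\omega > 0}$, i.e.\ $\supp F \subset [0, \infty)$, and consequently $f = \F^* F \in \F^* L_2(\R_+)$.

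The main obstacle is the preparatory step of establishing the Cauchy representation and $L_2$ boundary limit for arbitrary $f \in H^2(\mathbb{H})$; this requires nontrivial estimates justifying the vertical contour shift from $\R + i\delta$ to $\R$, but is entirely classical. Once these are in hand, the Paley-Wiener conclusion reduces to the one-line residue computation of $\F K_\delta$: the one-sided support of this kernel is precisely the Fourier-side imprint of the holomorphy of $f$ on $\mathbb{H}$, and taking the boundary limit transfers this one-sidedness to the support of $F$.
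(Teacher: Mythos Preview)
The paper does not supply its own proof of this statement; it is quoted as a classical result from Rudin \cite[Theorem~19.2]{R_86} and used as a tool in the proof of Lemma~\ref{5:T_szero}. Your outline is a correct route to the theorem.

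One comment on the architecture of your argument versus the standard one (as in Rudin). For the inclusion $H^2(\mathbb{H})\subset\F^*L_2(\R_+)$ you import the $L_2$ boundary limit and the Cauchy integral representation as prerequisites, and then the Paley--Wiener conclusion is indeed the one-line residue computation of $\F K_\delta$. This is legitimate, but in most textbook treatments those preparatory facts are not assumed in advance; they are obtained simultaneously with the Paley--Wiener statement by a direct contour shift. Concretely: for fixed $\delta>0$ and $\omega<0$ one applies Cauchy's theorem to $f(z)e^{-i\omega z}$ on rectangles $[-R,R]\times[\delta,M]$ and lets $M\to\infty$; the factor $e^{\omega M}\to 0$ kills the top edge and one obtains $\F[f(\cdot+i\delta)](\omega)=0$ directly, without ever touching the boundary. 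The same shift between two heights $\delta_0<\delta_1$ gives $\F[f(\cdot+i\delta_1)](\omega)=e^{-(\delta_1-\delta_0)\omega}\F[f(\cdot+i\delta_0)](\omega)$, and monotone convergence as $\delta\downarrow 0$ produces $F\in L_2(\R_+)$ together with the boundary limit. That route is self-contained; yours trades the contour-shifting estimates for a cleaner endgame, at the cost of front-loading the analytic work into the ``main obstacle'' you flag. Either way the content is the same: holomorphy on $\mathbb{H}$ is exactly one-sided Fourier support.
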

\begin{proof}[Proof of Lemma \ref{5:T_szero}]

Before proceeding to the proof let us point out the following. Throughout the proof we fix the branch $\arg z\in [0, \pi]$ for the function $p(z)=z^a$, $a\in\mathbb{C}$. We will use that its restriction to $\R\setminus \{0\}$ has analytic extension to the upper half-plane $\mathbb{H}$. We also note that it is uniformly bounded on the closure $\overline{\mathbb{H}}\setminus \{0\}$ if $\Re a = 0$.

By the Paley-Wiener Theorem it is sufficient to establish that $\CT^*\I_{[1/2, 1]}$ has analytic extension to $\mathbb{H}$ and satisfies the growth condition. To prove the first claim observe that by Corollary \ref{2:H_decomposition} we have
\[
\bra*{\CT^*\I_{[1/2, 1]}}(x) = \rho(x)\psi(x)h(x),
\]
where $h$ is an entire function. The identity
\[
\rho(x)\psi(x) = \abs{x}^{\bar{s}}e^{-\frac{i\pi}{2}\bar{s}\sign x} = x^{\bar{s}}e^{-\frac{i\pi}{2}\bar{s}} \text{ for }x\in \R\setminus \{0\}
\]
implies that $\rho(x)\psi(x)$ has analytic extension to the upper half-plane $\mathbb{H}$, which is equal to $z^{\bar{s}}e^{-\frac{i\pi}{2}\bar{s}}$ for $z\in\mathbb{H}$. Therefore the function $\bra*{\CT^*\I_{[1/2, 1]}}(x)$ also extends to $\mathbb{H}$.

To check the growth condition define the function
\[
\tilde{\CT}(z) = \frac{e^{iz}}{\sqrt{2\pi}}z^{\bar{s}}e^{-\frac{i\pi}{2}\bar{s}}Z_{\bar{s}}(-z), \quad z\in\overline{\mathbb{H}}\setminus \{0\},
\]
which is holomorphic on $\mathbb{H}$. The expansion \eqref{A_eq:asymp} yields that for a certain constant $C$ the bound
\begin{equation}\label{5_eq:H_est}
\abs*{z^{\bar{s}}e^{-\frac{i\pi}{2}\bar{s}}Z_{\bar{s}}(-z) - e^{-\pi\Im s}z^{-2i\Im s}}\le \begin{cases}
\frac{C\abs{z}^{\Re s}}{1+\abs{z}^{1+\Re s}}, \text{ if }\Re s<0,\\
\frac{C}{1+\abs{z}}, \text{ otherwise}
\end{cases}
\end{equation}
holds for all $z\in\overline{\mathbb{H}}\setminus \{0\}$.

Next define the functions
\[
g(z) = z^{-2i\Im s}u(z), \quad u(z)= e^{-\pi\Im s}\int_{1/2}^1e^{itz}t^{-2i\Im s}dt, \quad z\in \overline{\mathbb{H}}\setminus \{0\}.
\]
The function $g$ belongs to the Hardy space $H^2(\mathbb{H})$. Indeed, by the definition the Fourier transform of the restriction $u|_{\mathbb{R}}$ is supported on $\mathbb{R}_+$. Hence $u$ belongs to the Hardy space $H^2(\mathbb{H})$ by the Paley-Wiener Theorem. Since $z^{i\Im s}$ is holomorphic and bounded on $\mathbb{H}$, the function $g$ belongs to the Hardy space $H^2(\mathbb{H})$ by the definition of the latter.

We have that the analytic extension of $\bra*{\CT^*\I_{[1/2, 1]}}(x)$ to $\mathbb{H}$ is equal to
\[
\bra*{\CT^*\I_{[1/2, 1]}}(z) = \int_{1/2}^1\tilde{\mathcal{T}}(zt)dt, \quad z\in\mathbb{H}.
\]
Since the defined function $g(z)$ belongs to the Hardy space, it is remaining to show that
\[
(\CT^*\I_{[1/2, 1]})(z)-g(z)\in H^2(\mathbb{H}).
\]
This implies that $\bra*{\CT^*\I_{[1/2, 1]}}(x)\in \F^*L_2(\R_+)$ by the Paley-Wiener Theorem.

Let us check the growth condition for the difference. Assume first that $\Re s<0$. By the estimate \eqref{5_eq:H_est} we have
\[
\norm*{\CT^*\I_{[1/2]}(\cdot+i\delta)-g(\cdot+i\delta)}_{L_2}^2\le C^2\int_{1/2}^1\int_\R\frac{\abs{(x+i\delta)t}^{2\Re s}}{(1+\abs{(x+i\delta)t}^{1+\Re s})^2}dxdt.
\]
Consider the cases $\delta\ge 1$ and $\delta<1$ separately
\[
\norm*{\CT^*\I_{[1/2]}(\cdot+i\delta)-g(\cdot+i\delta)}_{L_2}^2 \le 
\begin{cases}
\begin{aligned}
&C^2\int_{1/2}^1\frac{1}{t^2}dt\int_\R\frac{1}{1+x^2}dx = C_{\delta\ge 1}, \text{ if }\delta\ge 1,\\
&\frac{C^2}{2}\int_\R\frac{A+\abs{x}^{2\Re s}}{(1+\frac{1}{2^{1+\Re s}}\abs{x}^{1+\Re s})^2}dx = C_{\delta\le 1}, \text{ if }\delta <1.
\end{aligned}
\end{cases}
\]
We conclude that the growth condition holds
\[
\sup_{\delta > 0} \int_\R\abs*{\CT^*\I_{[1/2]}(x+i\delta)-g(x+i\delta)}^2dx \le C_{\delta\ge 1}+C_{\delta\le 1} <+\infty,
\]
and hence the function $\CT^*\I_{[1/2, 1]}-g$ belongs to the Hardy space. 

In the case $\Re s >0$ the bound \eqref{5_eq:H_est} implies the inequality
\[
\norm*{\CT^*\I_{[1/2]}(\cdot+i\delta)-g(\cdot+i\delta)}_{L_2}^2\le C^2\int_{1/2}^1\frac{1}{t}dt\int_\R \frac{1}{(1+\abs{x})^2}dx,
\]
which similarly proves the desired claim.

Lemma \ref{5:T_szero} is proved.
\end{proof}

\section{Wiener-Hopf factorization of $G_f$}\label{6_sec:WH_fact}
    We refer to \cite{S_05} for the introduction to trace-class and Hilbert-Schmidt operators. Recall that we denote bounded Borel functions and the respective operators of pointwise multiplication by one symbol.
    
    Let us show how Corollary \ref{2:WH_factorization} follows from Theorem \ref{2:PW_spaces}. The first assertion for Wiener-Hopf operators follows from the property of the convolution $\supp f*g\subset \supp f+\supp g$. Denote $\mathcal{U}=\CT\F^*$. By Theorem \ref{2:PW_spaces} we have
    \[
    \F^*W_f\F = \F^*\I_+\F f\F^*\I_+\F = \CT^*\I_+\CT f\CT^*\I_+\CT = \CT^*G_f\CT,
    \]
    which yields
    \[
    G_f = \mathcal{U}W_f\mathcal{U}^*.
    \]
    For $f_\pm \in\F^*L_1(\R_\pm)$ we conclude
    \[
    G_{f_+f_-} = \mathcal{U}W_{f_+f_-}\mathcal{U}^* = \mathcal{U}W_{f_-}\mathcal{U}^*\mathcal{U}W_{f_+}\mathcal{U}^*=G_{f_-}G_{f_+}.
    \]
    The rest of the relations follow similarly.
    
    To prove the second assertion we first recall the derivation for the Wiener-Hopf operators. Substitution of the first claim $W_{f_+f_-} = W_{f_-}W_{f_+}$ into the formula for the commutator gives
    \begin{multline*}
    [W_{f_-}, W_{f_+}] = W_{f_+f_-}-W_{f_+}W_{f_-} = \I_+\F f_+\F^*\F f_-\F^*\I_+-\I_+\F f_+\F^*\I_+\F f_-\F^*\I_+ =\\
    = \I_+\F f_+\F^*\I_-\F f_- \F^*\I_+.
    \end{multline*}
    The claim of the commutator being trace-class follows from the operators $\I_+\F f_+\F^*\I_-$, $\I_-\F f_- \F^*\I_+$ each being Hilbert-Schmidt. Indeed, these are integral operators with the kernels $K_\pm(x, y) = \hat{f}_\pm(x-y)$. A direct calculation of $L_2$ norms of the kernels gives
    \[
    \int_{\R_+}dx\int_{\R_-}dy \abs{\hat{f}_+(x-y)}^2 = \int_0^\infty y\abs{\hat{f}(y)}^2dy,
    \]
    which is finite since $f\in H_{1/2}(\R)$. The argument for $f_-$ is similar.
    
    To calculate the trace we use Mercer's theorem (see \cite[Theorem~3.11.9]{S_15}). Mercer's theorem states that for an integral trace-class operator with a continuous kernel $K(x, y)$ its trace is equal to the integral of $K(x, x)$. In our case
    \[
    \Tr [W_{f_-}, W_{f_+}] = \int_{\R_+}dx\int_{\R_-}dy \hat{f}_+(x-y)\hat{f}_-(y-x) = \int_0^\infty y \hat{f}(y)\hat{f}(-y)dy.
    \]
    
    The assertion for general $s$ follows from
    \[
    [G_{f_-}, G_{f_+}] = \mathcal{U}[W_{f_-}, W_{f_+}]\mathcal{U}^*.
    \]
    Corollary \ref{2:WH_factorization} is proven completely.
    \appendix

\section{Hypergeometric functions}\label{appendix:hg_fun}

Recall that hypergeometric functions \cite[13.1.2, 15.1.1]{AS_64} are defined by the formulae
\begin{equation}\label{A_eq:def}
  \FO{a}{b}{z} = \sum_{k=0}^\infty \frac{(a)_k}{(b)_kk!}z^k,\qquad \FT{a}{b}{c}{z} = \sum_{k=0}^\infty \frac{(a)_k(b)_k}{(c)_kk!}z^k,
\end{equation}
where $(a)_k = a(a+1)\ldots (a+k-1)$. Hypergeometric functions have the integral representations (see \cite[13.2.1, 15.3.1]{AS_64}):
\begin{align}\label{A_eq:int}
  &\FO{a}{b}{z} = \frac{\Gamma(b)}{\Gamma (a)\Gamma(b-a)}\int_0^1 t^{a-1}(1-t)^{b - a - 1}e^{tz}dt,\\
  \label{A_eq:int_2}
  &\FT{c}{a}{b}{z} = \frac{\Gamma(b)}{\Gamma(a)\Gamma(b-a)}\int_0^1t^{a-1}(1-t)^{b-a-1}(1-tz)^{-c}dt.
\end{align}
The asymptotic \cite[13.5.1]{AS_64} of the function ${}_1F_1$ as $z\to\infty$ is
\begin{equation}\label{A_eq:asymp}
\FO{a}{b}{z}=\frac{\Gamma(b)}{\Gamma(b-a)}e^{\pm i\pi a}z^{-a}\bra*{1+O(\abs{z}^{-1})}+\frac{\Gamma(b)}{\Gamma(a)}e^zz^{a-b}\bra*{1+O(\abs{z}^{-1})},
\end{equation}
where the upper sign being taken if $\arg z\in (-\pi/2, 3\pi/2)$, the lower sign if $\arg z\in (-3\pi/2, -\pi/2]$.

Last, we mention Kummer's formula \cite[13.1.27]{AS_64}
\begin{equation}\label{A_eq:kummer}
  \FO{a}{b}{z} = e^z\FO{b-a}{b}{-z}.
\end{equation}

Let us proceed to their application to orthogonal polynomial ensembles. For $\Re s > -1/2$ introduce
\[
    w_s(z) = \frac{1}{2\pi}\Gam{1+s, 1+\bar{s}}{1+2\Re s}(1-z)^{\bar{s}}(1-\bar z)^s, \quad \abs{z}=1.
\]
Orthogonal polynomials with respect to the weight $w_s(z)d\theta$, $z=e^{i\theta}$ may be expressed in terms of hypergeometric functions.

See \cite[p. 369]{BNR_06}, \cite[p. 31-34]{BC_08} for the theorem below. Basor and Chen note in \cite[p. 34]{BC_08} that the orthogonality of Hypergeometric functions was found by Askey in commentary on Szeg\H{o}'s collected papers \cite[p. 304]{AS_82}.
\begin{theorem}\label{A:orth_formula}
Monic orthogonal polynomials $\{\Phi_n\}_{n\ge 0}$ with the weight $w_s(z)d\theta$ have the following expression
\[	
\Phi_n(z) = \Gam{s+\bar{s} + 1 + n, \bar{s}+1}{\bar{s}+n+1, s+\bar{s}+1}\FT{-n}{\bar{s}+1}{s+\bar{s}+1}{1-z}.
\]
Further, their norm is equal to
\[
\norm{\Phi_n}^2_{L_2(\T, w_s(\theta)d\theta)} = \Gam{s+\bar{s}+1+n, n+1, s+1, \bar{s}+1}{\bar{s}+n+1, s+n+1, s+\bar{s}+1}.
\]
\end{theorem}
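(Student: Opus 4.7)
Plan: the right-hand side of the claimed formula is manifestly a polynomial of degree at most $n$ in $z$ (via the terminating ${}_2F_1$ series), so it suffices to verify three things: that it is monic of degree $n$, that it is orthogonal in $L_2(\T,w_s\,d\theta)$ to every polynomial of lower degree, and that its $L_2$-norm equals the stated quantity. All three reduce to one family of integral evaluations on the circle, for which the key ingredient is the Beta integral
\begin{equation*}
\frac{1}{2\pi}\int_{-\pi}^\pi e^{im\theta}(1-e^{i\theta})^a(1-e^{-i\theta})^b\,d\theta = (-1)^m\Gam{a+b+1}{a+m+1,\,b-m+1}, \qquad m\in\Z,\ \Re(a+b)>-1,
\end{equation*}
which I would prove by expanding both factors as binomial series in $e^{\pm i\theta}$, selecting the diagonal via $\frac{1}{2\pi}\int e^{iN\theta}\,d\theta=\delta_{N,0}$, and summing the resulting terminating series by Gauss' formula $\FT{a}{b}{c}{1}=\Gam{c,c-a-b}{c-a,c-b}$.

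Writing
\[
\FT{-n}{\bar s+1}{s+\bar s+1}{1-z}=\sum_{j=0}^n \frac{(-n)_j(\bar s+1)_j}{(s+\bar s+1)_j\,j!}(1-z)^j,
\]
the coefficient of $z^n$ comes only from $j=n$ and equals $(\bar s+1)_n/(s+\bar s+1)_n$; the Gamma prefactor in the statement is its reciprocal, giving monicness. For orthogonality, I substitute the expansion into $\int \Phi_n(e^{i\theta})e^{-ik\theta}w_s(\theta)\,d\theta$ and apply the Beta integral termwise with $a=\bar s+j$, $b=s$, $m=-k$. The factor $(s+\bar s+1)_j$ in the denominator of the series coefficient cancels $\Gamma(s+\bar s+1+j)$ coming from the Beta integral, and the identity $(\bar s+1)_j/\Gamma(\bar s+j+1-k)=(\bar s+j+1-k)_k/\Gamma(\bar s+1)$ simplifies the remainder. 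The integral collapses to a constant multiple of
\[
\sum_{j=0}^n(-1)^j\binom{n}{j}(\bar s+j+1-k)_k,
\]
which, up to sign, is the $n$-th finite difference at zero of a polynomial of degree $k$ in $j$. This vanishes for $0\le k<n$, establishing orthogonality.

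For $k=n$ the same polynomial has degree exactly $n$ with leading coefficient $1$, so the sum equals $(-1)^n n!$; combined with the identity $\norm{\Phi_n}^2=\langle\Phi_n,z^n\rangle_{w_s}$ (which follows from monicness and orthogonality), this delivers the norm once the Gamma factors from the normalization of $w_s$, the prefactor of $\Phi_n$, the Beta integral, and the $(-1)^n n!$ are collected and simplified. I expect the main obstacle to be precisely this last simplification: the conceptual proof is short, but the routine Pochhammer bookkeeping needed to bring the resulting product into the compact four-Gamma-ratio form stated is lengthy. A secondary concern is the choice of branches in $(1-e^{\pm i\theta})^{s,\bar s}$; these may be tracked directly through the sign $(-1)^m$ above, or absorbed via the half-angle identity $(1-e^{i\theta})(1-e^{-i\theta})=4\sin^2(\theta/2)$, which reduces $w_s$ up to a unimodular factor to the manifestly real weight $(2\sin(\theta/2))^{2\Re s}$ and makes all parameter identities transparent.
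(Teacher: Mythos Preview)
The paper does not prove this theorem; it is quoted from the literature (Szeg\H{o}'s collected papers \cite{AS_82} and Basor--Chen \cite{BC_08}), so there is no in-paper argument to compare against.

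Your direct-verification approach is correct and is essentially the classical one. The terminating ${}_2F_1$ is visibly a polynomial in $z$; the leading-coefficient computation gives monicness exactly as you say. Termwise application of the circular beta integral
\[
\frac{1}{2\pi}\int_{-\pi}^{\pi} e^{im\theta}(1-e^{i\theta})^{a}(1-e^{-i\theta})^{b}\,d\theta
= (-1)^{m}\,\Gam{a+b+1}{a+m+1,\,b-m+1}
\]
reduces $\langle\Phi_n,z^k\rangle_{w_s}$ to a constant times $\sum_{j=0}^{n}(-1)^{j}\binom{n}{j}(\bar s+j+1-k)_k$, the $n$-th finite difference of a degree-$k$ polynomial in $j$; this vanishes for $0\le k<n$ and equals $(-1)^{n}n!$ for $k=n$. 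Collecting the Gamma factors (using $s+\bar s=2\Re s$ and $n!=\Gamma(n+1)$) does give precisely the stated norm --- the simplification is routine and there is no hidden obstacle.

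The only point that genuinely needs care is the one you flag yourself: the binomial expansions of $(1-e^{\pm i\theta})^{a}$ do not converge on the circle, so the beta integral cannot be obtained by termwise integration as written. Justify it instead by analytic continuation in $a,b$ from a region where the expansion is valid, or by rewriting the integral as a contour integral on $|z|=1$ with a fixed determination of $\arg(1-z)$ (your half-angle remark $(1-e^{i\theta})(1-e^{-i\theta})=4\sin^{2}(\theta/2)$ is the right way to pin down the branches consistently with the paper's definition of $w_s$). Once that is in place, your proof is complete.
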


\end{document}